\newtheorem{definition}{Definition}[section]
\newtheorem{theorem}{Theorem}[section]
\newtheorem{lemma}[theorem]{Lemma}
\newtheorem{corollary}[theorem]{Corollary}
\newtheorem{rem}[theorem]{Remark}
\newcommand{\REQUIRE}{\State{\textbf{Input:\ }}}
\newcommand{\ENSURE}{\State{\textbf{Output:\ }}}
\newcommand{\floor} [1] {\left \lfloor{#1}\right \rfloor}
\renewcommand{\eqref}[1]{\text{Eq.~}\ref{#1}}
\newcommand{\Nystrom}{Nystr\"{o}m~}
\newcommand{\rvc}[2]{
\textcolor{red}{#1}%
    \IfNoValueF{#2}{\protect\footnotemark{\footnotetext{\textcolor{red}{#2}}}%
    }%
}
\begin{document}

\title{Fast and Accurate Gaussian Kernel Ridge Regression Using Matrix Decompositions for Preconditioning}
\author
{Gil Shabat\footnote{Corresponding author: gils@playtika.com}~~~~Era Choshen~~~Dvir Ben Or~~~Nadav Carmel\\
Playtika AI Research Lab, Israel\\
}
\date{}
\maketitle

\begin{abstract}
	This paper presents a preconditioner-based method for solving a kernel ridge regression problem. In contrast to other methods, which utilize either fast matrix-vector multiplication or a preconditioner, the suggested approach uses randomized matrix decompositions for building a preconditioner with a special structure, that can also utilize fast matrix-vector multiplications. This hybrid approach is efficient in reducing the condition number, exact and computationally efficient, enabling to process large datasets with computational complexity linear to the number of data-points.  Also, a theoretical upper bound for the condition number is provided. For Gaussian kernels, we show that given a desired condition number, the rank of the needed preconditioner, can be determined directly from the dataset.
\end{abstract}


\section{Introduction}
\label{sec:intro}
Kernel methods are a way to embed the input space into a high dimensional feature space, which enables learning over a richer and more complex hypothesis class. One of the most elementary applications of kernel methods is Kernel Ridge Regression (KRR). The problem setup can be defined in the following manner - let the training data be defined as $(x_1, y_1), \dots, (x_n, y_n) \in \mathcal{X} \times{\mathcal{Y}}$  where $\mathcal{X} \subseteq \mathbb{R}^{d} $ is the input domain and $ \mathcal{Y} \subseteq \mathbb{R} $ is the output domain, let $k: \mathcal{X} \times \mathcal{X} \rightarrow \mathbb{R}$, and a regularization parameter $\beta > 0$, with the response for a given input x estimated as:
\begin{equation}
f(x) = \sum_{j=1}^{n}{k(x_j,x)\alpha_j}
\end{equation}
where $\alpha=(\alpha_1\cdots \alpha_n)^{T}$ is the solution of the equation
\begin{equation}
\label{eq:ridge_regression}
(K+\beta I)\alpha = y
\end{equation}
and $y=(y_1 \cdots y_n)^{T}$ is the label vector.
\begin{rem}
$K \in \mathbb{R}^{n\times n}$ is the kernel matrix defined by $K_{ij}=k(x_i,x_j)$
\end{rem}
KRR, while simple, is a powerful technique that achieves empirically good results. However, KRR comes with the drawback of having to compute the KRR estimator. The straightforward solution for the estimator requires inverting the matrix, which requires $\mathcal{O}(n^3)$ time, and is very restrictive for large datasets. Furthermore, holding the original kernel in memory requires $\mathcal{O}(n^2)$ storage, which by itself is prohibitive for large n.

These drawbacks have driven research to focus on scaling up kernel based methods, mostly by approximation, yielding methods such as the Nystr\"{o}m method for low-rank kernel approximation, or the randomized construction of feature maps such as Random Fourier Features (RFF), originally proposed by Rahimi and Recht \cite{rahimi_recht}.

Another approach for the kernel ridge regression problem is using a preconditioner \cite{shen2006fast, freitas2006fast}. It was recently suggested to use RFF as a preconditioner for the kernel matrix \cite{avron_krr} and then to use conjugate gradient to solve the regression problem. In this paper, a different approach for the preconditioner is presented, based on randomized matrix factorizations \cite{halko2011finding,mahoney2011randomized,shabat2018randomized,clarkson2017low}. The suggested preconditioner combines between the interpolative decomposition, \Nystrom approximation and fast improved Gauss transform to achieve an efficient preconditioner that can be applied in $\mathcal{O}(n)$ operations, i.e. linear in the number of data points.

\subsection{Related work}
Building scalable and computationally efficient algorithms for kernel methods is an active research area. Some methods use low rank approximations to approximate the kernel directly, such as Nystr\"{o}m approximation or random Fourier features. This approach is very popular and has many variants where the main idea is to reduce the computational cost by low rank approximation while retaining reasonable model quality. 
Recent work on these approaches can be found in \cite{lu2014scale, dai2014scalable,avron2016high,musco2017recursive,rudi2017falkon,wang2019block,williams2001using,smola2000sparse}. Though these approaches are efficient, both computationally and performance (accuracy) wise, the obtained solution is an approximation. The quality of the approximation is data dependent and therefore there can be scenarios  where the number of samples needed to achieve satisfying results can be relatively large, for example when the data is uniformly spread all over its support.
A different approach uses fast matrix vector multiplications (MVM), e.g fast Gauss transform or fast multipole methods to solve the regression problem using Krylov iterations \cite{raykar2007fast,freitas2006fast,shen2006fast,alfke2018nfft}. 
These methods reduce the computational cost of each Krylov iteration significantly, from $\mathcal{O}(n^2)$ to $\mathcal{O}(n\log n)$ or even $\mathcal{O}(n)$. However, when the kernel matrix is ill-conditioned, the number of iterations required might be huge.
Other approaches, discussed in \cite{avron_krr,srinivasan2014preconditioned,cutajar2016preconditioning,ma2017diving}, use a preconditioner to reduce the condition number, which in turn reduces the required number of Krylov iterations.
Some methods combine between the ideas mentioned above, for example by using a preconditioner along with fast MVM. The method presented in \cite{gardner2018gpytorch,wang2019exact} uses a truncated pivoted Cholesky decomposition and L-BFGS optimizer with a fast GPU-based matrix-vector multiplication which yields fast and scalable algorithm. 

\subsection{Contributions}
There is usually a tradeoff between using a fast MVM kernel matrix approach and applying a preconditioner. The fast MVM is made possible to due to the special structure of the kernel, however most preconditioners do not maintain this special structure required for fast MVM, incurring an additional computational cost in their application. 
This paper suggests a solution to this tradeoff by creating a preconditioner that can be applied with fast MVM by maintaining the kernel matrix’s unique structure.  This preconditioner essentially enjoys both worlds, it reduces the condition number but does not incur the cost of performing regular matrix multiplication instead of fast MVM. This yields an algorithm that not only utilizes fast MVM on the kernel matrix, but also has a reduced condition number due to a preconditioner that can itself be applied using fast MVM.
Given $n$ the number of data points, $l \ll n$ a sampling parameter and $t$ the number of iterations, the suggested algorithm has a runtime complexity of $\mathcal{O}(nl^2+nt)$ and memory consumption of $\mathcal{O}(nl)$.
The algorithm can provide a solution with any desired accuracy, up to the machine precision where there is a tradeoff between $l$ and $t$ - selecting more sampling points (larger $l$) reduces the condition number yielding a smaller number of iterations $t$.


In addition, we present a theoretical analysis of the condition number of the preconditioned matrix and give a theorem for its upper bound. 
Furthermore, for a Gaussian kernel, the paper provides a theoretical lower bound for the low rank approximation required to reach any desired condition number. This lower bound can be calculated directly from the data, without building the kernel matrix or conducting any other complex computations.
\section{Preliminaries}
\subsection{Notation}
Throughout the paper, matrices are indicated by capital letters and vectors and scalars by small letters. 
The norm $\Vert\cdot\Vert$ when applied to vectors, refers to the standard Euclidean norm, i.e. $\Vert v \Vert = \sqrt{\sum_{i=1}^n{v_i^2}}$ and to the spectral norm when applied to matrices, i.e. $\Vert A \Vert = \sup_{v \neq 0} \frac{\Vert Av\Vert}{\Vert v \Vert}$. The norm $\Vert \cdot \Vert_A$ applied to a vector with respect to a positive semidefinite matrix $A$ is defined $\Vert v \Vert_A=\sqrt{\langle v,Av \rangle}$. Singular values are indicated in descending order, by $\sigma_1,\ldots,\sigma_n$ and eigenvalues by $\lambda_1,\ldots,\lambda_n$ also in descending order when they are real. $A \ge 0$ on a square matrix indicates that $A$ is positive semidefinite and $A \ge B$ means $A-B \ge 0$. $A^\dagger$ is used to indicate the pseudo inverse of $A$.
\subsection{\Nystrom Approximation}
The \Nystrom approximation is an important low rank matrix approximation method which can applied to symmetric positive semidefinite (SPSD) matrices. The method obtains a low rank approximation to a kernel matrix by using a subset of its columns. Formally, suppose $K \in \mathbb{R}^{n\times n}$ is a symmetric positive semidefinite matrix, $C \in \mathbb{R}^{n\times k}$ a subset of the columns of $K$ and $U \in \mathbb{R}^{k\times k}$ is the overlap between $C$ and $C^T$, then $K$ can be approximated as:
\begin{equation*}
K \approx CU^{\dagger}C^T.
\end{equation*}
The quality of the approximation depends on the selection of the columns. Though the factorization seems identical to a special case of SPSD matrix approximated by
 the CUR approximation (described below), the theory of the \Nystrom method originates from the discretization of eigenfunctions of continuous operators. 
 For more information the reader is referred to \cite{nemtsov2016matrix,williams2001using,drineas2005nystrom}. 
 In this paper, ``Nystr\"{o}m" is used when the approximation is applied to an SPSD matrix, and CUR to a general matrix.
\subsection{Interpolative and CUR decompositions}
\label{sec:id_cur}
Interpolative decomposition (ID) \cite{cheng2005compression} is a matrix factorization that can be applied to any matrix $A$ and defined as the following:
\begin{equation}
\label{eq:ID1}
A_{m\times n} \approx B_{m \times k}T_{k\times n}
\end{equation}
where $B$ is a subset of the $k$-columns of $A$ and $T$ is an interpolation matrix with certain properties (such as spectral norm bounded by a factor dependent on the  size of the matrix \cite{liberty2007randomized}) that is used for reconstructing $A$ from $B$. 
The columns of $B$ are computed by rank revealing factorization \cite{gu1996efficient,pan2000existence,miranian2003strong,chandrasekaran1994rank,RRQR_svd} in order to get an error $\Vert A-BT\Vert$ bounded by a factor proportional to the $k$th singular value of $A$. 
In more detail, using the development from \cite{cheng2005compression}, suppose $AP=QR$ is a strong rank revealing QR (RRQR) factorization of $A$ (suppose $m \ge n$), where $P$ is an $n \times n$ permutation matrix, $Q$ is an $m \times n$ matrix with orthonormal columns and $R$ is an $n \times n$ upper triangular matrix. Splitting $Q$ and $R$:
\begin{equation*}
Q = \begin{bmatrix}
Q_{11} & Q_{12} \\
Q_{21} & Q_{22}
\end{bmatrix}~~~~~\mbox{and}~~~~R=\begin{bmatrix}
R_{11} & R_{12} \\
0 & {R_{22}}
\end{bmatrix}
\end{equation*}
such that $Q_{11}$ and $R_{11}$ are of size $k \times k$, $Q_{21}$ is $(m-k) \times k$, $Q_{12}$ is $k\times (n-k)$, $Q_{22}$ is $(m-k) \times (n-k)$,  and $R_{12}$ is $k \times (n-k)$ and $R_{22}$ is $(n-k)\times (n-k)$ then
\begin{equation}
\Vert AP - \begin{bmatrix} Q_{11} \\ Q_{21} \end{bmatrix} \begin{bmatrix} R_{11} & R_{12} \end{bmatrix} \Vert_2 \le \sqrt{1+k(n-k)}\sigma_{k+1}(A).
\label{eq:boundid}
\end{equation}
By defining:
\begin{equation*}
B \triangleq  \begin{bmatrix} Q_{11}R_{11} \\ Q_{21}R_{11} \end{bmatrix}
\end{equation*}

Note that the definition of $B$ reconstructs the first $k$ columns of $AP$ from the matrices $Q$ and $R$. Therefore, since $B$ is determined strictly by the permutation matrix $P$ and $T$ is not used in the algorithm, it is sufficient only to apply an RRQR factorization for the subset selection.
In addition to the deterministic algorithm for computing the ID that is described in \cite{cheng2005compression}, there is a randomized version that starts by projecting $A$ onto a low dimensional space using a random normally distributed matrix \cite{martinsson2011randomized,halko2011finding}. The projection step of this randomized version is used in Algorithm \ref{alg:ID_Sampling}.

The CUR decomposition \cite{mahoney2009cur,drineas2008relative}, is a pseudo-skeleton decomposition \cite{goreinov1997theory, osinsky2018pseudo} that factors a matrix $A$ into three matrices, where $C$ and $R$ are subsets of the columns and rows of $A$ respectively, and $U$ is defined as the inverse matrix of the overlap between $C$ and $R$. 
Most of the matrices discussed in the paper are symmetric, so the Nystr\"{o}m approximation will be used as a special case of the CUR decomposition. The following Lemma gives an error bound for the CUR decomposition using columns and rows selected by the interpolative decomposition:
\begin{lemma}\cite{voronin2017efficient}
	\label{lemma:curid}
	Let $A \in \mathbb{R}^{m\times n}$, that satisfies $A= CV^{T} + E$ and $A=WR+\tilde{E}$, where C,R are the k columns and k rows of A, respectively, and W,$V^{T}$ are the interpolation matrices from the interpolative decomposition. Suppose further that R is full rank, and that U $\in \mathbb{R}^{kxk}$ is defined as $U=V^TR^{\dagger}$. Then
	\begin{equation}
	\vert \vert A - CUR \vert \vert \le \vert \vert E \vert \vert + \vert \vert \tilde{E} \vert \vert.
	\end{equation}
	\begin{rem}
		Note that $ \Vert E \Vert \le p(n,k)\sigma_{k+1}$ \cite{cheng2005compression}. Moreover, the bound in Eq. \ref{eq:boundid} is optimal. Practical algorithms for computing the  deterministic ID achieve $p(n,k)= \sqrt{1+4k(n-k)}$ \cite{liberty2007randomized}. Equivalently, a similar bound exists for the randomized ID, the reader is referred to \cite{martinsson2011randomized}.
	\end{rem}
\end{lemma}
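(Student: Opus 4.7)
The plan is to decompose $A - CUR$ algebraically so that the two error terms $E$ and $\tilde E$ appear naturally, each multiplied by an orthogonal projection built from $R$.

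First, I would substitute the definition $U = V^T R^{\dagger}$ and rewrite $CUR = CV^T R^{\dagger} R$. From the column ID relation $A = CV^T + E$ we have $CV^T = A - E$, so
\begin{equation*}
CUR = (A-E) R^{\dagger} R = A R^{\dagger} R - E R^{\dagger} R,
\end{equation*}
and therefore
\begin{equation*}
A - CUR = A(I - R^{\dagger} R) + E R^{\dagger} R.
\end{equation*}

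Next, I would use the row ID relation $A = WR + \tilde E$ to simplify $A(I - R^{\dagger}R)$. Since one of the Moore--Penrose identities gives $R R^{\dagger} R = R$, it follows that $R(I - R^{\dagger}R) = 0$, hence $WR(I - R^{\dagger} R) = 0$, and
\begin{equation*}
A(I - R^{\dagger} R) = \tilde E (I - R^{\dagger} R).
\end{equation*}
Plugging this back in yields the key identity
\begin{equation*}
A - CUR = \tilde E(I - R^{\dagger} R) + E R^{\dagger} R.
\end{equation*}

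Finally, I would take the spectral norm and use the triangle inequality together with submultiplicativity. The matrices $R^{\dagger} R$ and $I - R^{\dagger} R$ are orthogonal projections (onto the row space of $R$ and its complement, respectively, using $R$ full rank), so each has spectral norm at most $1$. This gives
\begin{equation*}
\|A - CUR\| \le \|\tilde E\|\,\|I - R^{\dagger}R\| + \|E\|\,\|R^{\dagger} R\| \le \|E\| + \|\tilde E\|,
\end{equation*}
as required. The only nontrivial step is spotting the substitution $CV^T = A - E$ that lets one swap the column ID relation for the row ID relation via the projection $R^{\dagger} R$; everything else is a mechanical use of the pseudoinverse identity and norm bounds on projections, so I expect no real obstacle.
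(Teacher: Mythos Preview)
Your argument is correct. The paper does not prove this lemma itself; it quotes it from \cite{voronin2017efficient}, so there is no in-paper proof to compare against. Your approach---substituting $CV^T=A-E$ to write $CUR=(A-E)R^{\dagger}R$, then using $R(I-R^{\dagger}R)=0$ to replace $A(I-R^{\dagger}R)$ by $\tilde E(I-R^{\dagger}R)$, and bounding by the unit spectral norm of the projections $R^{\dagger}R$ and $I-R^{\dagger}R$---is exactly the standard derivation given in the cited reference, so nothing is missing or different.
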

\subsection{Fast Gauss Transform}
Fast Gauss transform (FGT) is a variant of the fast multipole method (FMM) \cite{greengard1987fast}. FMM was originally formulated as an efficient method for complex physical calculations, efficiently performing matrix-vector products. FGT deals with the evaluation of the discrete Gauss transform:
\begin{equation}
\label{eq:dscrt_gt}
G(y_j) = \sum_{i=1}^N q_{i}e^{-\Vert y_j - x_i \Vert ^{2} / h^2}
\end{equation}	
where $\{x_i\}_{i=1}^N, x_i \in \mathbb{R}^d$ represent the centers of the Gaussians, each of which has bandwidth $h$, and $q_i$ are the weight coefficients for each Gaussian. Direct evaluation of the summation for a set of points, $\{y_j\}_{j=1}^M, y_i \in \mathbb{R}^d$, is computationally costly in large scale scenarios, as it requires $\mathcal{O}(MN)$ operations. FGT allows setting a desired degree of precision $\Delta$ for the approximation of the Gaussian function, 
and reduces the complexity of the summation (\ref{eq:dscrt_gt}) to $\mathcal{O}(M + N)$, accompanied by a constant factor, which grows exponentially with the dimension $d$
 \cite{greengard1991fast}. In order to overcome this disadvantage, which makes plain FGT inapplicable to high-dimensional data, 
 the Fast Improved Gauss (FIG) transform \cite{yangimproved} uses tree data structures and an adaptive space subdivision technique, leading to reduction of the constant factor to asymptotically polynomial order, 
 such that the computational complexity is $\mathcal{O}\left((M + N)\text{poly}_p(d)\right)$, where $p$ is the degree of the polynomial, that depends on the desired accuracy. 
 This polynomial dependency, though still high, scales much better in the dimension comparing to exponential growth. It is worth noting, that the computational complexity in $d$ is dependent on the data as well as the selection of $h$  \cite{morariu2009automatic}.
\begin{definition}
	\label{def:figtree_def}
	Let $FIG(X,Y,h,q)$ represent the application of the FIG transform to compute equation (\ref{eq:dscrt_gt}), where $X \in \mathbb{R}^{N \times d}$ and $Y \in \mathbb{R}^{M \times d}$ represent matrices of data points for which the kernel is generated, weighted by the vector $q \in \mathbb{R}^N$. According to this notation, for $M=N$ it holds that $FIG(Y,X,h,q)=FIG^*(X,Y,h,q)$.
\end{definition}
The algorithm discussed in this paper can be applied to any other kernel that can utilize fast matrix-vector product such as other FMM functions. An approach for fast matrix-vector product for any kernel and based on nearest neighbors is described in \cite{shen2006fast}. Another approach, which is based on non-uniform Fourier transform (NFFT), 
is presented in \cite{alfke2018nfft}. This approach, which is applicable to low and medium dimensions, has several advantages over the FIG transform: 
It is robust and can applied easily to kernels other than the Gaussian kernels, and it reaches higher accuracy under the same computational budget. 
Since the algorithm presented here uses MVM as a black box, the FIG can be easily replaced by other MVM, such as \cite{shen2006fast,alfke2018nfft} when it can give better performance (for example, kernel other than Gaussian or low to medium dimension).
 Most of these methods, from nearest neighbors to subdivision schemes, though different, have a lot in common since they are all
  based on the fact that the contribution of far electric charge (or mass) points to a field can be considered as a single far charge point (or mass) of their superposition.
\section{Description of the algorithm}
The proposed algorithm is based on conjugate gradient with a preconditioner designed not only to be effective in its ability to reduce the condition number, but also in its efficiency to be applied in terms of computational cost and memory consumption.
In order to keep the time and storage complexity to a minimum , the proposed scheme uses the Fast Improved Gauss transform \cite{yangimproved,yang2005efficient} (``FIG transform"). If the kernel is Gaussian, then the FIG transform can be used for applying the kernel to any vector in $\mathcal{O}(n)$, but the preconditioner has to be applied using the FIG transform as well, to keep the advantage of using a Gaussian kernel.
In order to achieve this, the kernel is approximated using a Nystr\"{o}m decomposition, i.e. $\tilde{K}=CUC^T$, where $C \in \mathbb{R}^{n \times k}$ is a subset of columns of $K$ and $U \in \mathbb{R}^{k \times k}$ is the inverse of a submatrix of $C$. Adding the ridge regression parameter $\beta$, the preconditioner $\tilde{K}+ \beta I$ is obtained and can be applied efficiently using the Woodbury 
matrix identity:
\begin{equation}
\begin{split}
\label{eq:woodbury}
(\tilde{K}+\beta I)^{-1}&=(CUC^T+\beta I)^{-1}\\&=\beta^{-1}I-\beta^{-1}C(\beta U^{-1}+C^TC)^{-1}C^T
\end{split}
\end{equation}
Since $C$ is a submatrix of $K$, then the application of both $C$ and $C^T$ to a vector, can be done using FIG transform in $\mathcal{O}(n+k)$. $U^{-1}$ is also a subset of $K$ (unlike $U$ itself, which involves matrix inversion) and can therefore be applied in the same way, in $\mathcal{O}(k)$. The Woodbury matrix identity requires computing the inverse of $\beta U^{-1}+C^TC$, however this is a small matrix of size $k\times k$.
Constructing the matrix $\beta U^{-1}+C^TC$ can be done by the application of the FIG transform to the identity matrix, $I$ (of size $k$) in $\mathcal{O}(nk)$ operations. Each application of Eq. \ref{eq:woodbury} involves solving the following linear system
\begin{equation}
\label{eq:to_factor1}
(\beta U^{-1}+C^TC)x=w,
\end{equation}
for the application of $(\beta U^{-1}+C^TC)$ to a vector $w$, so it is better to store the Cholesky factor of the matrix in Eq. \ref{eq:to_factor1}. However,  this matrix is sometimes ill-conditioned, for example, when taking many anchor points, some of them might be very close to each other, leading to numerical instability. To overcome this, Eq. \ref{eq:to_factor1} is modified to:
\begin{equation}
\label{eq:to_factor2}
(\beta I+U^{\frac{1}{2}}C^TCU^{\frac{1}{2}})U^{-\frac{1}{2}}x=U^{\frac{1}{2}}w.
\end{equation}
where the matrix $\beta I+U^{\frac{1}{2}}C^TCU^{\frac{1}{2}}$ is generally more stable, and its Cholesky decomposition can be applied to solve Eq. \ref{eq:to_factor2} for $z=U^{-\frac{1}{2}}x$ and then restoring $x$ using $x=U^{\frac{1}{2}}z$.  
\begin{equation}
\label{eq:chol_factor}
L^TL=\beta I+U^{\frac{1}{2}}C^TCU^{\frac{1}{2}}
\end{equation}
where $L$ is the Cholesky factor of size $k \times k$. 
The algorithm can be viewed as two steps:
\begin{itemize}
	\item Preparation stage (or ``setup" stage), which selects anchor data points from the dataset and also performs some computations to be used later in the iterative stage, such as building the matrix $L$ of the Cholesky decomposition and the matrix $U^{\frac{1}{2}}$.
	\item Iterative stage, which applies conjugate gradient using the preconditioner that was computed in the preparation stage.
\end{itemize}
Selection of the anchor points can be done in various ways, such as random sampling or farthest point sampling (FPS) \cite{bronstein2008numerical}. In this work, the suggested method is based on the interpolative decomposition (or equivalently, on  rank revealing decompositions, \cite{gu1996efficient,pan2000existence,miranian2003strong}), which has the following advantages:
\begin{itemize}
	\item It is strongly related to the CUR decomposition \cite{voronin2017efficient}, and therefore also to the Nystr\"{o}m approximation. The interpolative decomposition is also integral to the method, yielding theoretical spectral bounds that can give theoretical guarantees for the overall performance of the algorithm.
	\item It selects columns of the input matrix, which enables the usage of the FIG-transform later on in the algorithm. Selection of columns is equivalent to selection of points, since each column is based on the distance of a specific points to all the points, so selecting the $i$th column is equivalent to selecting the $i$th datapoint.
	\item The selection of the columns (points) is done in a way that yields a good low rank approximation to the original kernel matrix, and therefore allows to obtain a good preconditioner. 
	\item It has a randomized version, which can be computed (using the FIG-transform) in linear time and the most demanding computational part can be parallelized.
	\item The randomized version can be further accelerated by using sparse random projection \cite{clarkson2017low,aizenbud2016randomized,dasgupta2010sparse} where the non zeros entries are only in $\{ 1, -1\}$. This yields a much faster algorithm (i.e. Algorithm \ref{alg:sparse_ID_Sampling}) at the expense of performance.
\end{itemize}
Algorithm \ref{alg:ID_Sampling} uses normal distribution for the random projection, while Algorithm \ref{alg:sparse_ID_Sampling} uses sparse projections. Algorithm \ref{alg:BuildChol} builds the Cholesky factor and Algorithm \ref{alg:ApplyPrecond} uses this factor for the application of the algorithm. The main function, that uses those algorithms is described in Algorithm \ref{alg:SolveKRR}.
\begin{algorithm}[htp]
	\caption{\textsf{AnchorSelection}: Select anchor points using randomized interpolative decomposition approach (RRQR)}
	\label{alg:ID_Sampling}
	\begin{algorithmic}[1]
		\REQUIRE $X$ - Dataset, matrix of size $n \times d$.\\
		$k$ - Number points to choose (rank), \\
		$l \ge k$ - number of random projections, typically slightly larger than $k$\\
		$h>0$ - Width of the Gaussian kernel
		\ENSURE $S$ - A set of anchor points from $X$
		\State Generate a random matrix $\Omega \in \mathbb{R}^{l\times n}$, s.t. $\omega_{i,j} \sim \mathcal{N}(0,1)$
		\State Y $\leftarrow$ FIG($X$,$X$, $h$, $\Omega$) \# Apply FIG transform to $\Omega$
		\State $Y^TP=QR$ \# Apply Strong RRQR to $Y$, $P$ is a permutation matrix, but can viewed as a vector
		\State $S \leftarrow P(1:k)$ \# Choose the first $k$ elements in $P$
		\State return $S$
	\end{algorithmic}
\end{algorithm}

\begin{algorithm}[h]
	\caption{\textsf{SparseAnchorSelection}: Select anchor points using sparse randomized interpolative decomposition approach (RRQR)}
	\label{alg:sparse_ID_Sampling}
	\begin{algorithmic}[1]
		\REQUIRE $X$ - Dataset, matrix of size $n \times d$.\\
		$k$ - Number points to choose (rank), \\
		$l \ge k$ - number of random projections, typically slightly larger than $k$\\
		$h>0$ - Width of the Gaussian kernel\\
		$r$ - Number of non-zero entries in each column of the random projection matrix
		\ENSURE $S$ - A set of anchor points from $X$
		\State $Y \leftarrow zeros(n,l)$
		\For{$i$ in range$(l)$}
		\State Define a set $R$ of $r$ values chosen randomly from $\{1,\ldots,n\}$
		\State Define a vector $v$ of length $r$, where each entry is taken from a Rademacher distribution
		\State $X_R \leftarrow X(R,:)$
		\State $Y(:, i) \leftarrow$ FIG($X$,$X_R$, $h$, $v$) \# Apply FIG transform to $v$
		\EndFor
		\State $Y^TP=QR$ \# Apply Strong RRQR to $Y$, $P$ is a permutation matrix, but can viewed as a vector
		\State $S \leftarrow P(1:k)$ \# Choose the first $k$ elements in $P$
		\State return $S$
	\end{algorithmic}
\end{algorithm}
Algorithm \ref{alg:ID_Sampling} computes a randomized ID to the full kernel matrix. Applying a random matrix $\Omega$ to the full kernel, 
by using the FIG transform (line 7) and the pivoted QR on the matrix $Y^T$ returns the columns that gives good low rank approximation, 
according to the ID, i.e. the matrix $B$ from Section \ref{sec:id_cur}. The interpolation matrix, $T$ is not needed and therefore not computed.

\begin{algorithm}
	\caption{\textsf{BuildCholesky}: Computes the Cholesky decomposition according to Eq. \ref{eq:chol_factor}}
	\label{alg:BuildChol}
	\begin{algorithmic}[1]
		\REQUIRE $X$ - Dataset, matrix of size $n \times d$,\\
		$S$ - Vector of length $k$, \\
		$\beta>0$ - Ridge parameter,\\
		$h>0$ - Width of the Gaussian kernel
		\ENSURE $L$ - Cholesky factor
		\State $X_S \leftarrow X(S,:)$, \# $X_S$ is a subset of $X$ containing the anchor points
		\State $U^{-1} \leftarrow FIG(X_S, X_S, h, I_{k\times k})$
		\State Using SVD or EVD build $U^{\frac{1}{2}}$
		\State $Y \leftarrow FIG(X, X_S, h, U^{\frac{1}{2}})$
		\State $Y \leftarrow \beta I + U^{\frac{1}{2}}FIG(X_S, X, h, Y)$		\State $ Y = L^TL$ \# Cholesky decomposition
		\State return $L, U^{\frac{1}{2}}$
	\end{algorithmic}
\end{algorithm}

\begin{algorithm}[ht]
	\caption{\textsf{ApplyPreconditioner}: Applies the preconditioner according to Eq. \ref{eq:woodbury}}
	\label{alg:ApplyPrecond}
	\begin{algorithmic}[1]
		\REQUIRE $X$ - Dataset, matrix of size $n \times d$.\\
		$S$ - Anchor points, matrix of size $k \times d$ \\
		$\beta>0$ - Ridge parameter\\
		$h>0$ - Width of the Gaussian kernel\\
		$L$ - Cholesky factor\\
		$x$ - Input vector of length $n$.\\
		$U^{\frac{1}{2}}$ - Output from Algorithm \ref{alg:BuildChol}.
		\ENSURE $r$ - The application of the preconditioner $(\tilde{K}+\beta I)^{-1}$ to $x$.
		\State $X_S \leftarrow X(S,:)$, \# $X_S$ is a subset of $X$ containing the anchor points
		\State $y \leftarrow U^{\frac{1}{2}}$ FIG($X_S$, $X$, $h$, $x$)
		\State Solve $L^TLz'=y$ \# back substitution 
		\State $z \leftarrow U^{\frac{1}{2}} z'$
		\State $r \leftarrow \beta^{-1}x-\beta^{-1}$ FIG($X$, $X_S$, $h$, $z$)		\State return $r$
	\end{algorithmic}
\end{algorithm}

\begin{algorithm}[h]
	\caption{\textsf{SolveKRR}: Solves the Gaussian kernel ridge regression}
	\label{alg:SolveKRR}
	\begin{algorithmic}[1]
		\REQUIRE $X$ - Dataset, matrix of size $n \times d$.\\
		$\beta>0$ - Ridge parameter\\
		$h>0$ - Width of the Gaussian kernel\\
		$k$ - Number of points to sample.\\
		$l \ge k$ - Number of random projections to use.\\
		$b$ - vector of length $n$.
		\ENSURE $x$ - The solution to the equation $(K+\beta I)x=b$.
		\State Select anchor points $S$ from $X$ using Algorithm \ref{alg:ID_Sampling} or Algorithm \ref{alg:sparse_ID_Sampling}.
		\State Build the Cholesky factor $L$ using Algorithm \ref{alg:BuildChol}.
		\State Solve $(K+\beta I)x=b$, using CG, where $Kx$ can be computed by FIG($X$,$X$,$h$, x) and the preconditioner can be applied using Algorithm \ref{alg:ApplyPrecond}
		\State return $x$
	\end{algorithmic}
\end{algorithm}
Algorithm \ref{alg:sparse_ID_Sampling} is similar to Algorithm \ref{alg:ID_Sampling} except that a sparse random projection matrix is used, and the distribution is not normal. The algorithm uses a random projection matrix that has only $r$ non zero entries in each column. Therefore, the projection can be obtained by applying a thin FIG transform of size $n \times r$ to a vector of length $r$ which reduces the computational complexity. This is done $l$ times. After this projection step, the next steps of the algorithm are the same as in Algorithm \ref{alg:ID_Sampling}, i.e. computing pivoted QR and returning the anchor points. 
\begin{rem}
	\label{rem:sparse_selection}
	The computational complexity of Algorithm \ref{alg:sparse_ID_Sampling} is $\mathcal{O}((n+r)l+nl^2)$, typically $ r \ll l$.
\end{rem}
\begin{rem}
	Algorithm \ref{alg:ID_Sampling} uses sampling technique based on interpolative decomposition. When using pivoted QR, the computational complexity is $\mathcal{O}(nl^2)$. In practice, the performance is very similar to RRQR.
\end{rem}
\begin{rem}
	The computational complexity of Algorithm \ref{alg:SolveKRR}, when using pivoted QR in Algorithm \ref{alg:ID_Sampling} is $\mathcal{O}(nl^2+nk+k^3+(n+k^2)t)$, where $t$ is the number of iterations used in the conjugate gradient.
\end{rem}
\begin{rem}
	The computational complexity of Algorithm \ref{alg:BuildChol} is $\mathcal{O}(nk+k^3)$
\end{rem}
\begin{rem}
	Applying the preconditioner (Algorithm \ref{alg:ApplyPrecond}) in each iteration takes $\mathcal{O}(n+k^2)$ operations.
\end{rem}
\begin{rem}
	Anchor points selection can be done differently, for example by random sampling or FPS. In this case, the theoretical bounds will not hold, but may still produce good results in practice and reduce the computational cost of Algorithm \ref{alg:ID_Sampling}. For example, from $\mathcal{O}(nl^2)$ using pivoted QR, to $\mathcal{O}(k)$ using random sampling.
\end{rem}

\section{Theoretical Analysis}
\label{sec:theory}
This section presents the theoretical analysis of the algorithm. Specifically, Theorem \ref{th:maintheorem} gives an upper bound on the condition number of the preconditioned system and Corollary \ref{cor:tradeoff}  specifies the trade-off between the number of anchor points selected and the upper bound of the condition number.

The following Lemma gives an error bound to the \Nystrom approximation computed by interpolative decomposition:
\begin{lemma}
	\label{lem:era}
	Let $K \in \mathbb{R}^{n\times n}$ be a symmetric matrix, where $C$ are subset of columns chosen using an interpolative decomposition and let $\tilde{K}=CUC^T$ be its Nystr\"{o}m approximation, then $\vert \vert K-\tilde{K} \vert \vert \le 2\sigma_{k+1}(K)\cdot p(k,n)$ where p(k,n) is a function bounded by a low degree polynomial in $k$ and $n$.
\end{lemma}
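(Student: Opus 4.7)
The plan is to invoke Lemma \ref{lemma:curid} with $A = K$ and exploit symmetry of $K$ so that both error terms appearing there collapse to the same quantity. First, I would apply the interpolative decomposition to the columns of $K$, obtaining $K = C V^T + E$ where $C$ consists of $k$ columns of $K$ indexed by some set $S$. Taking transposes and using $K = K^T$ yields a row factorization $K = V C^T + E^T$, so in the notation of Lemma \ref{lemma:curid} one may take $W = V$, $R = C^T$, and $\tilde{E} = E^T$, whence $\|\tilde{E}\| = \|E\|$.

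Next, by the remark following Lemma \ref{lemma:curid} and the bound in \cite{cheng2005compression}, $\|E\| \le p(n,k)\, \sigma_{k+1}(K)$ with $p(n,k)$ a low-degree polynomial in $n$ and $k$ (for the deterministic ID, $p(n,k) = \sqrt{1 + k(n-k)}$; a comparable bound holds for the randomized variant). Substituting into Lemma \ref{lemma:curid} gives
\[
\|K - C U C^T\| \;\le\; \|E\| + \|\tilde{E}\| \;\le\; 2\, p(n,k)\, \sigma_{k+1}(K),
\]
which is the stated conclusion — provided one identifies the Nystr\"{o}m middle factor $U$ with the quantity $V^T (C^T)^\dagger$ defined in Lemma \ref{lemma:curid}.

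This last identification is the one subtlety I would check carefully, and it is the main potential obstacle. Because the same index set $S$ is used for rows and columns (forced by symmetry), the defining property of the interpolation matrix gives $V^T\!\mid_S = I_k$, so that $V^T C^T$ equals the $k \times k$ overlap submatrix $K(S,S)$, i.e.\ the $U^{-1}$ of the Nystr\"{o}m parametrization used in Section~3. Assuming $C$ has full column rank (which holds generically and is implicit when the ID selects $k$ linearly independent columns), this forces $V^T (C^T)^\dagger$ to equal $K(S,S)^{-1}$, matching the $U$ appearing in $\tilde{K} = C U C^T$. Once this identification is justified, the bound is immediate from the two applications of the ID error estimate and the triangle inequality already encoded in Lemma \ref{lemma:curid}, and there is no further analytic work required.
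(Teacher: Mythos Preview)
Your argument is correct and follows essentially the same route as the paper: invoke Lemma~\ref{lemma:curid}, use the symmetry of $K$ to force $R=C^T$ and $\tilde{E}=E^T$ so that the two error terms coincide, and then apply the ID bound $\|E\|\le p(n,k)\sigma_{k+1}(K)$. Your additional paragraph verifying that $V^T(C^T)^\dagger$ coincides with the Nystr\"{o}m middle factor $K(S,S)^{-1}$ is a detail the paper leaves implicit, so your write-up is in fact slightly more careful.
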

\begin{proof}
	By Lemma \ref{lemma:curid}  
	\begin{equation}
	\vert \vert K-\tilde{K} \vert \vert \le \vert \vert E \vert \vert + \vert \vert \tilde{E} \vert \vert
	\end{equation}
	Note however that since K is symmetric, $R=C^{T}$ meaning
	\begin{equation}
	\tilde{K}=CUR=CUC^{T} 
	\end{equation} 
	hence $ \Vert E \Vert = \Vert \tilde{E} \Vert$.
	Therefore it follows immediately that
	\begin{equation}
	\label{eq:lemmaeq1}
	\Vert K - \tilde{K} \Vert \le 2\vert \vert E \vert \vert
	\end{equation} 
	By the definition of RRQR, it is known that the decomposition must satisfy
	\begin{equation}
	\label{eq:lemmaeq2}
	\Vert E \Vert \le \sigma_{k+1}(K)\cdot p(k,n).
	\end{equation}
	Therefore, combining Eq. \ref{eq:lemmaeq1} with Eq. \ref{eq:lemmaeq2} yields
	\begin{equation}
	\Vert K - \tilde{K} \Vert \le 2\sigma_{k+1}(K)\cdot p(k,n)
	\end{equation}	\end{proof}
\begin{lemma}
	\label{lemma:schur}
	Let $K \in \mathbb{R}^{n\times n}$ be a positive semidefinite matrix, and let $\tilde{K}=CUC^T$ its Nystr\"{o}m approximation, then $K-\tilde{K} \ge 0$.
\end{lemma}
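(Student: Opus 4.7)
The plan is to reduce the statement to the classical fact that the Schur complement of a principal block of a positive semidefinite matrix is itself positive semidefinite. First I would reorder the rows and columns of $K$ so that the $k$ anchor indices $S$ come first; this is a symmetric permutation and preserves both the PSD property and the Nystr\"om construction. After the reordering, $K$ has the block form
\begin{equation}
K = \begin{pmatrix} W & K_{12} \\ K_{12}^{T} & K_{22} \end{pmatrix}, \qquad C = \begin{pmatrix} W \\ K_{12}^{T} \end{pmatrix}, \qquad U = W^{-1},
\end{equation}
where $W = K(S,S)$ and $K_{12} = K(S,S^{c})$.

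Next I would multiply out $\tilde{K}=CUC^{T}$ explicitly, obtaining
\begin{equation}
\tilde{K} = \begin{pmatrix} W \\ K_{12}^{T} \end{pmatrix} W^{-1} \begin{pmatrix} W & K_{12} \end{pmatrix} = \begin{pmatrix} W & K_{12} \\ K_{12}^{T} & K_{12}^{T} W^{-1} K_{12} \end{pmatrix},
\end{equation}
so that
\begin{equation}
K - \tilde{K} = \begin{pmatrix} 0 & 0 \\ 0 & K_{22} - K_{12}^{T} W^{-1} K_{12} \end{pmatrix}.
\end{equation}
The nontrivial block is precisely the Schur complement of $W$ in $K$. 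Since $K$ is positive semidefinite, a standard result guarantees that this Schur complement is positive semidefinite; the block-diagonal matrix above is then PSD as well, which is the claim.

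The only real obstacle is the case where $W$ is singular, because then $U = W^{-1}$ is not literally defined. I would address this either by replacing $W^{-1}$ with the Moore--Penrose pseudoinverse $W^{\dagger}$ (in which case the same Schur-complement identity holds and one uses the fact that the range of $K_{12}^{T}$ is contained in the range of $W$ whenever $K$ is PSD), or by a perturbation argument: apply the result to $K+\varepsilon I$, for which the leading block is $W+\varepsilon I_{k}$ and is invertible, and let $\varepsilon\downarrow 0$. A clean alternative I might use instead writes $K = B^{T}B$ with $B_{S}$ the columns of $B$ indexed by $S$; then $\tilde{K} = B^{T}\Pi B$ where $\Pi = B_{S}(B_{S}^{T}B_{S})^{\dagger}B_{S}^{T}$ is the orthogonal projector onto the range of $B_{S}$, so $K-\tilde{K} = B^{T}(I-\Pi)B \ge 0$ is immediate.
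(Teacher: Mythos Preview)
Your argument is correct and is exactly the Schur-complement approach the paper invokes; you have simply written out in full what the paper states in one line. Your treatment of the singular-$W$ case is a nice addition that the paper omits entirely.
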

\begin{proof}
	The lemma follows directly from the Schur's complement of $K$.
\end{proof}
\begin{lemma}[\cite{bernstein2009matrix} pp. 673]
	\label{lemma:book}
	For any two matrices $A, B \in \mathbb{C}^{m \times n}$ the following holds:
	\begin{equation}
	\label{eq:book_lemma}
	\vert \sigma_i(A)-\sigma_i(B) \vert \le \Vert A-B \Vert
	\end{equation}
\end{lemma}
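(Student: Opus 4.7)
The plan is to deduce the stated bound from the classical Courant--Fischer/Weyl min--max characterization of singular values, which states that for any $M \in \mathbb{C}^{m \times n}$ and any $1 \le i \le \min(m,n)$,
\begin{equation*}
\sigma_i(M) \;=\; \max_{\substack{V \subseteq \mathbb{C}^n \\ \dim V = i}} \; \min_{\substack{x \in V \\ \Vert x \Vert = 1}} \Vert M x \Vert.
\end{equation*}
I would take this as the starting point (it is the natural variational identity for the $i$-th largest singular value when the $\sigma_i$'s are ordered decreasingly, as the paper's notation specifies).

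The first substantive step is a pointwise comparison. For every unit vector $x$, the triangle inequality and the definition of the spectral norm give
\begin{equation*}
\Vert A x \Vert \;\le\; \Vert B x \Vert + \Vert (A-B) x \Vert \;\le\; \Vert B x \Vert + \Vert A - B \Vert.
\end{equation*}
Taking the minimum over $x$ in any fixed $i$-dimensional subspace $V$ of $\mathbb{C}^n$ and using the fact that $\min_x (f(x)+c) = \min_x f(x) + c$ for a constant $c$, I obtain
\begin{equation*}
\min_{x \in V,\,\Vert x \Vert = 1} \Vert A x \Vert \;\le\; \min_{x \in V,\,\Vert x \Vert = 1} \Vert B x \Vert \,+\, \Vert A - B \Vert.
\end{equation*}
Then maximizing over all $i$-dimensional subspaces $V$ and invoking the min--max identity on both sides yields $\sigma_i(A) \le \sigma_i(B) + \Vert A - B \Vert$.

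The role of $A$ and $B$ is perfectly symmetric (the bound $\Vert A - B \Vert = \Vert B - A \Vert$ is unchanged under swapping), so repeating the argument with $A$ and $B$ interchanged gives $\sigma_i(B) \le \sigma_i(A) + \Vert A - B \Vert$. Combining the two inequalities produces $\vert \sigma_i(A) - \sigma_i(B) \vert \le \Vert A - B \Vert$, which is Equation (\ref{eq:book_lemma}).

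I do not anticipate any real obstacle here: the only subtlety is being careful that the min--max formula is applied with the correct ordering convention (decreasing $\sigma_i$, as the paper fixes in the notation section) and that the min--max and max--min forms are both available so that the triangle-inequality step can be pushed through the inner $\min$ cleanly. An alternative, equally short route would be to embed $A$ and $B$ into the Hermitian dilations $\bigl(\begin{smallmatrix} 0 & A \\ A^* & 0 \end{smallmatrix}\bigr)$ and $\bigl(\begin{smallmatrix} 0 & B \\ B^* & 0 \end{smallmatrix}\bigr)$, whose nonzero eigenvalues are $\pm \sigma_i(A)$ and $\pm \sigma_i(B)$, and then invoke Weyl's eigenvalue perturbation inequality for Hermitian matrices; but the direct min--max proof above is shorter and self-contained.
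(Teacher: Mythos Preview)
Your argument is correct. The paper itself does not prove this lemma at all: it is quoted verbatim as a known result from \cite{bernstein2009matrix}, with only the citation given. So there is no ``paper's own proof'' to compare against; you have supplied a standard self-contained derivation via the Courant--Fischer min--max characterization, and the alternative Hermitian-dilation route you mention would work equally well.
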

\begin{definition}\cite{bermanis2013multiscale}
	\label{def:numerical_rank}
	The numerical rank of the Gaussian kernel $G_h^X$ up to precision $\delta \ge 0$ is
	\begin{equation*}
	\rho_\delta (G_h^X) \triangleq \# \left[j: \frac{\sigma_j(G_h^X)}{\sigma_1(G_h^X)} \ge \delta \right]
	\end{equation*}
\end{definition}
\begin{theorem}\cite{bermanis2013multiscale,bermanis2014cover}
	\label{th:gaussiankernelbound}
	Let $X=\{x_i\}_{i=1}^n \in \mathbb{R}^d$ be a set bounded by a box $B=I_1 \times I_2, \cdots I_d$, where $I_1, I_2, \cdots, I_d$
	are intervals in $\mathbb{R}$. Let $q_i$ be the length of the $i$th interval, i.e. $q_i=\vert I_i \vert$ and let $G_h^X$ be the associated kernel matrix, i.e. $(G_h^X)_{i,j}=g_h(x_i,x_j)=e^{\frac{\Vert x_i-x_j\Vert^2}{h^2}}$, then
	\begin{equation}
	\rho_\delta (G_h^X) \le \prod_{i=1}^d \left( \floor{\gamma q_i}+1\right),
	\end{equation}
	where $\gamma=\frac{2}{\pi}\sqrt{h^{-2}\ln{\delta^{-1}}}$.
\end{theorem}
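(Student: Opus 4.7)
The plan is to exploit the product structure of the Gaussian kernel and reduce the $d$-dimensional statement to $d$ independent one-dimensional claims. Since
\[
g_\epsilon(x,y)=e^{-\|x-y\|^2/\epsilon}=\prod_{i=1}^d e^{-(x^{(i)}-y^{(i)})^2/\epsilon},
\]
the kernel matrix $G_\epsilon^X$ is the entrywise (Hadamard) product of the $d$ coordinate-wise kernel matrices $G_\epsilon^{X_i}$, where $X_i=\{x_j^{(i)}\}_{j=1}^n\subset I_i$. A standard fact about Hadamard products is $\mathrm{rank}(A\odot B)\le\mathrm{rank}(A)\cdot\mathrm{rank}(B)$, and this extends to numerical rank once the one-dimensional approximations are controlled in spectral norm together with uniform bounds on the diagonal. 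So if I can show $\rho_\delta(G_\epsilon^{X_i})\le \lfloor \gamma q_i\rfloor+1$ for each coordinate, multiplying gives the theorem.

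For the one-dimensional bound I would use the Fourier representation
\[
e^{-t^2/\epsilon}=\sqrt{\tfrac{\epsilon}{4\pi}}\int_{-\infty}^{\infty}e^{-\epsilon\xi^2/4}\,e^{i\xi t}\,d\xi,
\]
truncate the integral to $|\xi|\le \Omega$, and replace the remaining integral by a Riemann/Nyquist sum on the interval $I_i$ of length $q_i$. The truncated, sampled kernel is of the form $\sum_{m=-M}^{M}c_m\,e^{i\xi_m x}e^{-i\xi_m y}$, which has rank at most $2M+1$. Choosing the sampling density at the Nyquist rate for functions bandlimited to $[-\Omega,\Omega]$ on an interval of length $q_i$ gives $2M+1\le \lfloor \Omega q_i/\pi\rfloor+1$. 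The truncation error is controlled by a Gaussian tail,
\[
\int_{|\xi|>\Omega}e^{-\epsilon\xi^2/4}\,d\xi\;\lesssim\;\tfrac{1}{\Omega}\,e^{-\epsilon\Omega^2/4},
\]
and forcing this to be at most $\delta$ (times the trivial upper bound $\sigma_1(G_\epsilon^{X_i})\le n$ that absorbs constants) yields the threshold $\Omega=2\sqrt{\epsilon^{-1}\ln\delta^{-1}}$, i.e.\ precisely $\Omega/\pi=\gamma$.

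To convert this operator-level approximation into a numerical rank bound I would invoke Lemma \ref{lemma:book}: if $\widetilde G$ has rank $r$ and $\|G_\epsilon^{X_i}-\widetilde G\|\le \delta\,\sigma_1(G_\epsilon^{X_i})$, then $\sigma_{r+1}(G_\epsilon^{X_i})\le \delta\,\sigma_1(G_\epsilon^{X_i})$, hence $\rho_\delta(G_\epsilon^{X_i})\le r$. Setting $r=\lfloor \gamma q_i\rfloor+1$ closes the one-dimensional case, and the tensorization described above produces the product bound $\prod_i(\lfloor\gamma q_i\rfloor+1)$.

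The hard part is the careful bookkeeping in two places. First, the passage from the bandlimited-and-sampled continuous kernel to its restriction on the finite point set $X_i$ must preserve the rank and the spectral-norm error, which requires either an explicit sampling identity for functions bandlimited on $I_i$ or a prolate-spheroidal approximation argument, both of which hide nontrivial constants inside the Gaussian tail estimate. Second, combining one-dimensional approximations via Hadamard product into a bound on $\rho_\delta$ in $d$ dimensions forces one to track how individual coordinate errors accumulate; the cleanest route is to write $G_\epsilon^X=\bigodot_i G_\epsilon^{X_i}=\bigodot_i(\widetilde G_i+E_i)$, expand, and bound the $2^d-1$ cross terms uniformly so that the total perturbation stays below $\delta\,\sigma_1(G_\epsilon^X)$, which is where the assumption that $\delta$ enters through $\gamma=\tfrac{2}{\pi}\sqrt{\epsilon^{-1}\ln\delta^{-1}}$ becomes essential.
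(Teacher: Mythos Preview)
The paper does not prove Theorem~\ref{th:gaussiankernelbound}; it is quoted verbatim from \cite{bermanis2013multiscale,bermanis2014cover} and then used as a black box in Corollary~\ref{cor:tradeoff}. There is thus no in-paper proof to compare your attempt against.

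That said, your sketch is in the spirit of the argument in the cited references: separability of the Gaussian reduces to one dimension, and in one dimension a truncated-Fourier/sampling construction produces a rank-$(\lfloor\gamma q_i\rfloor+1)$ approximation with the stated $\gamma$. The references carry this out by building a single rank-$\prod_i(\lfloor\gamma q_i\rfloor+1)$ tensor-product approximation to the $d$-dimensional kernel directly and bounding the total error by the Gaussian tail in $\mathbb{R}^d$, rather than assembling coordinate-wise approximations via the Hadamard product. The practical difference is exactly where your write-up becomes vague: tensorizing \emph{numerical} rank through $G_\epsilon^X=\bigodot_i(\widetilde G_i+E_i)$ requires (i) a \emph{lower} bound on $\sigma_1(G_\epsilon^X)$, not the upper bound $n$ you invoke, and (ii) per-coordinate errors at level roughly $\delta/d$ or $\delta^{1/d}$ to keep the $2^d-1$ cross terms below $\delta\,\sigma_1(G_\epsilon^X)$, which would perturb the constant inside $\ln\delta^{-1}$. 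The single $d$-dimensional construction in \cite{bermanis2013multiscale,bermanis2014cover} sidesteps both issues and delivers the clean formula for $\gamma$ as stated.
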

The following theorem gives an upper bound to the condition number:
\begin{theorem}
	\label{th:maintheorem}
	Let $\tilde{K} \in \mathbb{R}^{n \times n}$ be a rank $k$ Nystr\"{o}m approximation for a positive semidefinite matrix $K \in \mathbb{R}^{n \times n}$, such that $\Vert K-\tilde{K} \Vert \le M\sigma_{k+1}(K)$, for a positive constant $M$ (that may depend on $n$ and $k$) and a ridge parameter $\beta>0$, then 
	\begin{equation}
	\textnormal{cond}\left[ (\tilde{K}+\beta I)^{-1}(K+\beta I)\right] \le 1+\frac{M\lambda_{k+1}(K)}{\beta}
	\end{equation}
\end{theorem}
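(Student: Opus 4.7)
The plan is to reduce the condition number of the (non-symmetric) product $P=(\tilde{K}+\beta I)^{-1}(K+\beta I)$ to that of a symmetric positive definite matrix by a similarity transformation, and then sandwich its spectrum between $1$ and $1+M\lambda_{k+1}(K)/\beta$ using the PSD error bound from Lemma \ref{lemma:schur} and the operator-norm bound $\|K-\tilde{K}\|\le M\sigma_{k+1}(K)$.

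First I would observe that although $P$ is not symmetric, it is similar to
\[
S \;=\; (\tilde{K}+\beta I)^{-1/2}(K+\beta I)(\tilde{K}+\beta I)^{-1/2},
\]
which is symmetric positive definite, so $\mathrm{cond}(P)=\mathrm{cond}(S)=\lambda_{\max}(S)/\lambda_{\min}(S)$. Writing $K+\beta I=(\tilde{K}+\beta I)+(K-\tilde{K})$ gives the additive decomposition
\[
S \;=\; I \;+\; (\tilde{K}+\beta I)^{-1/2}(K-\tilde{K})(\tilde{K}+\beta I)^{-1/2}.
\]

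Next, I would bound the two extreme eigenvalues of $S$. For the lower bound, Lemma \ref{lemma:schur} yields $K-\tilde{K}\succeq 0$, hence the second summand above is PSD, which gives $S\succeq I$ and therefore $\lambda_{\min}(S)\ge 1$. For the upper bound, I would use that $\tilde{K}\succeq 0$ implies $(\tilde{K}+\beta I)^{-1}\preceq \beta^{-1}I$, so $\|(\tilde{K}+\beta I)^{-1/2}\|^{2}\le \beta^{-1}$, and then the submultiplicativity of the spectral norm combined with the hypothesis $\|K-\tilde{K}\|\le M\sigma_{k+1}(K)$ yields
\[
\lambda_{\max}(S)-1 \;\le\; \bigl\|(\tilde{K}+\beta I)^{-1/2}(K-\tilde{K})(\tilde{K}+\beta I)^{-1/2}\bigr\| \;\le\; \frac{\|K-\tilde{K}\|}{\beta} \;\le\; \frac{M\,\sigma_{k+1}(K)}{\beta}.
\]
Since $K$ is positive semidefinite, $\sigma_{k+1}(K)=\lambda_{k+1}(K)$, so dividing the two bounds gives the claimed estimate
\[
\mathrm{cond}(P) \;=\; \frac{\lambda_{\max}(S)}{\lambda_{\min}(S)} \;\le\; 1+\frac{M\lambda_{k+1}(K)}{\beta}.
\]

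There is no real obstacle here: the only thing one must be careful about is that $P$ is not itself symmetric, so the condition number is meant in the operator sense $\sigma_{\max}/\sigma_{\min}$; the similarity to $S$ is what legitimately replaces singular values with eigenvalues. The two ingredients doing the real work are Lemma \ref{lemma:schur} (giving $\lambda_{\min}(S)\ge 1$ for free) and the trivial spectral bound $\|(\tilde{K}+\beta I)^{-1}\|\le 1/\beta$, which converts the additive error bound on $\|K-\tilde{K}\|$ into a multiplicative bound on the preconditioned spectrum.
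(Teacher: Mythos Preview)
Your proof is correct and essentially identical to the paper's: both pass to the symmetrized matrix $S=(\tilde K+\beta I)^{-1/2}(K+\beta I)(\tilde K+\beta I)^{-1/2}$, use Lemma~\ref{lemma:schur} to get $S\succeq I$, and the bound $(\tilde K+\beta I)^{-1}\preceq \beta^{-1}I$ together with $\|K-\tilde K\|\le M\sigma_{k+1}(K)=M\lambda_{k+1}(K)$ to get $S\preceq (1+M\lambda_{k+1}(K)/\beta)I$. One small quibble: in your closing remark, similarity preserves \emph{eigenvalues}, not singular values, so the identity $\mathrm{cond}(P)=\mathrm{cond}(S)$ holds for the eigenvalue condition number $\lambda_{\max}/\lambda_{\min}$ (which is the relevant quantity for CG convergence), not for $\sigma_{\max}/\sigma_{\min}$ in general---the paper makes the same tacit identification.
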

\begin{proof}
	Combining Lemma \ref{lemma:schur} and Lemma \ref{lemma:book} with the fact that $K$ and $\tilde{K}$ are positive semidefinite, gives
	\begin{equation}
	\label{eq:eq1}
	0 \le K-\tilde{K} \le \Vert K-\tilde{K}\Vert I \le M\sigma_{k+1}(K)=M\lambda_{k+1}(K)
	\end{equation}
	Modifying Eq. \ref{eq:eq1} and adding $\beta I$ gives
	\begin{equation}
	\tilde{K}+\beta I \le K +\beta I\le \tilde{K}+\beta I+M\lambda_{k+1}(K)I
	\end{equation}
	Applying $(\tilde{K}+\beta I)^{-\frac{1}{2}}$ to both sides of the equation and using the fact that $(\tilde{K}+\beta I)^{-\frac{1}{2}}$ is symmetric positive semidefinite gives
	\begin{equation}
	\begin{split}
	I \le &(\tilde{K}+\beta I)^{-\frac{1}{2}}(K +\beta I)(\tilde{K}+\beta I)^{-\frac{1}{2}}
	\\ & \le
	I+M\lambda_{k+1}(K)(\tilde{K}+\beta I)^{-1}
	\end{split}
	\end{equation}
	Clearly, $(\tilde{K}+\beta I)^{-1} \le \beta^{-1}I$, which yields
	\begin{equation}
	\begin{split}
	I &\le (\tilde{K}+\beta I)^{-\frac{1}{2}}(K +\beta I)(\tilde{K}+\beta I)^{-\frac{1}{2}}\\
	&\le (1+\frac{M\lambda_{k+1}(K)}{\beta})I.
	\end{split}
	\end{equation}
	and finally,
	\begin{equation}
	\begin{split}
	&\text{cond}\left[ (\tilde{K}+\beta I)^{-1}(K+\beta I)\right]=\\
	&\text{cond}\left[ (\tilde{K}+\beta I)^{-\frac{1}{2}}(K+\beta I)(\tilde{K}+\beta I)^{-\frac{1}{2}}\right] \le \\
	& 1+\frac{M\lambda_{k+1}(K)}{\beta}
	\end{split}
	\end{equation}
	which completes the proof.
\end{proof}
\begin{corollary}
	\label{cor:tradeoff}
	
	Let $K$ be a Gaussian kernel matrix over a dataset $X \in \mathbb{R}^d$, i.e. $K=G_h^X$. Let $\tilde{K}$ be a rank $k$ Nystr\"{o}m approximation, such that $\Vert K-\tilde{K}\Vert \le M(n,k)\sigma_{k+1}(K)$ and let $\beta$ be a ridge parameter. Then, for a maximal condition number, $\xi$ 
	\begin{equation}\label{eq:condbound}
	k \ge \prod_{i=1}^d \left( \floor{\frac{2q_i}{\pi}\sqrt{h^{-2}\ln\frac{\bar{M}n}{\beta(\xi-1)}}}+1\right)
	\end{equation}
	where $\{q_i\}_{i=1}^d$ are the lengths of the intervals of the bounding box of the dataset $X$.
\end{corollary}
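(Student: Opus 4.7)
The plan is to chain Theorem \ref{th:maintheorem} with Theorem \ref{th:gaussiankernelbound}, using a simple crude bound on the top singular value of the Gaussian kernel to mediate between them.

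First I would invert Theorem \ref{th:maintheorem}: requiring the stated condition number bound to be at most $\xi$ is equivalent to demanding $M(n,k)\lambda_{k+1}(K)/\beta \le \xi-1$, i.e.
\begin{equation*}
\lambda_{k+1}(K) \;\le\; \frac{\beta(\xi-1)}{M(n,k)}.
\end{equation*}
Since $K$ is PSD we have $\lambda_{k+1}(K)=\sigma_{k+1}(K)$, and it suffices to control the ratio $\sigma_{k+1}(K)/\sigma_1(K)$ that appears in Definition \ref{def:numerical_rank}. Because the Gaussian kernel has entries in $[0,1]$, a standard estimate such as $\sigma_1(K)=\|K\|\le \|K\|_F \le n$ lets me pass to a numerical-rank threshold: the required inequality on $\sigma_{k+1}$ will hold as soon as
\begin{equation*}
\frac{\sigma_{k+1}(K)}{\sigma_1(K)} \;\le\; \frac{\beta(\xi-1)}{\bar{M} n} \;=:\; \delta,
\end{equation*}
where $\bar M$ absorbs $M(n,k)$ (this is where the bar in $\bar M$ in the statement comes from).

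Next I would translate this into a lower bound on $k$: by Definition \ref{def:numerical_rank}, if $k \ge \rho_\delta(G_\epsilon^X)$ then automatically $\sigma_{k+1}(K)/\sigma_1(K) < \delta$, which is exactly what we need. Applying Theorem \ref{th:gaussiankernelbound} with this $\delta$ and with $\gamma=\frac{2}{\pi}\sqrt{\epsilon^{-1}\ln \delta^{-1}}$ yields
\begin{equation*}
\rho_\delta(G_\epsilon^X) \;\le\; \prod_{i=1}^d\bigl(\floor{\gamma q_i}+1\bigr),
\end{equation*}
and substituting $\ln\delta^{-1}=\ln\frac{\bar M n}{\beta(\xi-1)}$ produces the product in \eqref{eq:condbound}. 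Therefore any $k$ at least as large as this product is sufficient, as claimed.

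The only non-routine step is justifying the replacement of $\sigma_{k+1}(K)$ by the scale-free ratio that Theorem \ref{th:gaussiankernelbound} controls: one has to be honest about the upper bound on $\sigma_1(K)$, and the cleanest choice is the entry-wise bound giving $\sigma_1(K)\le n$ (which is where the factor $n$ in the logarithm comes from). Everything else is algebraic rearrangement of the inequality from Theorem \ref{th:maintheorem} followed by a direct invocation of the Gaussian numerical-rank bound.
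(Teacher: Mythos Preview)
Your proposal is correct and follows essentially the same route as the paper: invert the bound from Theorem~\ref{th:maintheorem} to get a constraint on $\lambda_{k+1}(K)$, pass to the normalized ratio by bounding $\sigma_1(K)\le n$ (the paper writes $\|K\|$ and then replaces it by $n$ in the very next inequality), and then invoke Theorem~\ref{th:gaussiankernelbound} with $\delta=\beta(\xi-1)/(\bar M n)$ to bound $\rho_\delta$ and hence $k$. If anything, you are more explicit than the paper about where the factor $n$ and the supremum $\bar M$ come from.
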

\begin{proof}
	From Theorem \ref{th:maintheorem}, the condition number depends on $\lambda_{k+1}(K)$, i.e.
	\begin{equation}
	\text{cond}\left[ (\tilde{K}+\beta I)^{-1}(K+\beta I)\right]\le 1+\frac{\bar{M}\lambda_{k+1}(K)}{\beta} \le \xi
	\end{equation}
	where $\bar{M} \triangleq \sup{M(k)}$.
	Therefore, $\lambda_{k+1}(K)$ must satisfy
	$\lambda_{k+1}(K) \le \frac{\beta(\xi-1)}{\bar{M}}$. By defining $\delta=\frac{\beta(\xi-1)}{\bar{M}\Vert K \Vert}$ and using Theorem \ref{th:gaussiankernelbound}, 
	\begin{equation}
	\label{eq:condbound_gamma}
	\gamma=\frac{2}{\pi}\sqrt{h^{-2}\ln(\delta^{-1})} \le \frac{2}{\pi}\sqrt{h^{-2}\ln\frac{\bar{M}n}{\beta(\xi-1)}}
	\end{equation}
	and therefore,
	\begin{equation*}
	k \ge \prod_{i=1}^d \left( \floor{\frac{2q_i}{\pi}\sqrt{h^{-2}\ln\frac{\bar{M}n}{\beta(\xi-1)}}}+1\right)
	\end{equation*}
	where $\{q_i\}_{i=1}^d$ are the lengths of the intervals of the bounding box of the dataset $X$.
\end{proof}
\begin{rem}
	Corollary \ref{cor:tradeoff} enables to determine the required rank Nystr\"{o}m approximation and illustrates the tradeoff between 
	the condition number and the low rank, which has implications on the memory consumption and computational load of the algorithm. Smaller condition number yields less iterations on one hand, but on the other hand requires processing a larger matrix $\tilde{K}$. This corollary applies strictly for Gaussian kernels.
\end{rem}
\begin{rem}
	As a simple example, for the deterministic interpolative decomposition, $M=\sqrt{4k(n-k)+1}$, which yields $\bar{M}=\sqrt{n^2+1} \approx n$
\end{rem}
\begin{rem}
	Eq. \ref{eq:condbound} grows very slowly in $n$, which means that $k$ remains small even when $n$ grows fast. For example, suppose $\bar{M}=n$, $h=1$, $\xi=2$ and $\beta=1$, then for $n=10^6$, $\gamma=3.34$ and for $n=10^8$, $\gamma=3.86$ which does not affect the value of $k$ according to Corollary \ref{cor:tradeoff}. On the down side, it grows fast in $d$. 
	For example, for the \texttt{Buzz} dataset (see below), $q_i=1, h=1, \beta=0.1, n=400,000, d=77$ choosing $\xi=4$ leads to $k=4^{77} \approx 10^{46}$, which is very practical for real applications. 
	Though, it is interesting for its theoretical value, that links between the condition number and the size of the preconditioner.
\end{rem}
\begin{corollary}
	For a Nystr\"{o}m preconditioner built by choosing $\tilde{K}=CUC^T$, where $C$ are columns of $K$ chosen by the interpolative decomposition, then
	\begin{equation}
	\begin{split}
	\textnormal{cond}&\left[ (\tilde{K}+\beta I)^{-1}(K+\beta I)\right] \\
	&\le 1+\frac{2\lambda_{k+1}(K)\sqrt{4k(n-k)+1}}{\beta}\\
	&\le 1+\frac{2\lambda_{k+1}(K)(n+1)}{\beta}
	\end{split}
	\end{equation}
\end{corollary}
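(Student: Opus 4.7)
The plan is to combine the deterministic spectral bound for the interpolative-decomposition-based Nystr\"{o}m approximation with Theorem~\ref{th:maintheorem}, and then carry out one elementary algebraic simplification. First, I would invoke Lemma~\ref{lem:era} together with the remark following Lemma~\ref{lemma:curid}, which specifies $p(n,k) = \sqrt{1 + k(n-k)}$ for the deterministic interpolative decomposition. This yields $\|K - \tilde{K}\| \le 2\sigma_{k+1}(K)\sqrt{k(n-k)+1}$, so the hypothesis of Theorem~\ref{th:maintheorem} holds with the explicit constant $M = 2\sqrt{k(n-k)+1}$. Since $K$ is positive semidefinite, $\sigma_{k+1}(K) = \lambda_{k+1}(K)$, and substituting this $M$ into the conclusion of Theorem~\ref{th:maintheorem} gives directly the first inequality of the corollary.

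For the second inequality I would bound $\sqrt{k(n-k)+1}$ crudely in $n$ only. By the AM-GM inequality, $k(n-k) \le n^2/4$, so $\sqrt{k(n-k)+1} \le \sqrt{n^2/4 + 1}$; comparing squares then gives $\sqrt{n^2/4 + 1} \le n/2 + 1$, since $(n/2+1)^2 = n^2/4 + n + 1 \ge n^2/4 + 1$ for $n \ge 0$. Substituting this bound into the previous inequality yields the second displayed bound. There is essentially no obstacle: the corollary is a direct specialization of Theorem~\ref{th:maintheorem} to the ID-based Nystr\"{o}m construction, combined with the AM-GM step above to obtain a uniform-in-$k$ bound depending only on $n$.
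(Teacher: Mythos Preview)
Your proposal is correct and follows exactly the paper's approach: the paper's proof is the single sentence ``follows immediately by combining Lemma~\ref{lem:era} with Theorem~\ref{th:maintheorem},'' and you have simply spelled out the constant $M = 2\sqrt{k(n-k)+1}$ and the AM--GM simplification (which the paper also indicates in the remark preceding the corollary). There is nothing to add.
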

\begin{proof}
	The proof follows immediately by combining Lemma \ref{lem:era} with Theorem \ref{th:maintheorem}.
\end{proof}
\begin{rem}
	A similar bound can be developed immediately for other matrix decompositions. For example, if $\tilde{K}$ is the best rank $k$ approximation of $K$, computed by the singular value decomposition, then direct substitution gives
	\begin{equation*}
		\textnormal{cond}\left[ (\tilde{K}+\beta I)^{-1}(K+\beta I)\right] = \frac{\lambda_{k+1} + \beta}{\beta}=1+\frac{\lambda_{k+1}}{\beta}.
		\end{equation*}
\end{rem}

\subsection{Convergence Rate Analysis}
\label{sec:convergence_rate}
This subsection discusses the rate of convergence and the decrease in the number of iterations when using the preconditioner with respect to the non-preconditioned KRR. The analysis is done by using the theory from Section \ref{sec:theory} with facts on the convergence rate of the conjugate gradient algorithm \cite{golub2012matrix}. Formally, what is the required number of iterations, using conjugate gradient, for solving
\begin{equation}
\label{eq:krr_noprecond}
	(K+\beta I)x=b
\end{equation}
with respect to
\begin{equation}
	\label{eq:krr_precond}
	(\tilde{K}+\beta I)^{-1}(K+\beta I)x=(\tilde{K}+\beta I)^{-1}b
\end{equation}
Let $x_i$ and $x_i^{\text{precond}}$ be the solutions obtained after $i$ CG iterations for solving Eqs. \ref{eq:krr_noprecond} and \ref{eq:krr_precond}, respectively.
Similarly, $e_i$ and $e_i^{\text{precond}}$ are the corresponding errors with respect to the exact solution $x_*$, i.e. 
		$e_i = x_* - x_i$. Suppose $\kappa$ is the condition number of Eq. \ref{eq:krr_noprecond}, then the error in the $i$th iteration is given by 
\begin{equation}
	\label{eq:cgrate}
	\Vert e_i \Vert_{K+\beta I}=2\Vert e_0 \Vert_{K+\beta I} \left( \frac{\sqrt{\kappa}-1}{\sqrt{\kappa}+1} \right)^i \approx \Vert e_0 \Vert_{K+\beta I} \left( 1-\frac{2}{\sqrt{\kappa}} \right)^i .
\end{equation}
Where the last transition holds for large values of $\kappa$, which is usually the case for non-preconditioned systems. Next, suppose we wish to achieve a relative error $\delta$, such that $\Vert e_i \Vert_{K+\beta I}/\Vert e_0 \Vert_{K+\beta I} \le \delta$. From Eq. \ref{eq:cgrate} and using the fact that $\kappa=\frac{\lambda_1+\beta}{\lambda_n+_\beta}\approx \frac{\lambda_1}{\beta}$ is large, the number of iterations should be
\begin{equation}
\label{eq:noprecond_rate}
	i = \frac{\ln{\delta}}{\ln\left({1-\frac{2}{\sqrt{\kappa}}}\right)} \approx -\frac{\sqrt{\lambda_1}\ln{\delta}}{2\sqrt{\beta}}.
\end{equation}
Eq. \ref{eq:noprecond_rate} gives a good estimation for the number of iterations for the non-preconditioned system, depending on the largest eigenvalue of $K$. Similarly, for the preconditioned system, using Eq. \ref{eq:cgrate} and substituting the condition number $\kappa=1+\frac{M\lambda_{k+1}}{\beta}$ from Theorem \ref{th:maintheorem} gives
\begin{equation}
\label{eq:precond_rate1}
i^{\text{precond}}\ln\left( \frac{\sqrt{1+\frac{M\lambda_{k+1}}{\beta}}-1}{\sqrt{1+\frac{M\lambda_{k+1}}{\beta}}+1} \right)=\ln{\delta}
\end{equation}
Doing some algebraic manipulations, gives 
\begin{equation*}
	i^{\text{precond}}\ln\left( \frac{\sqrt{1+\frac{M\lambda_{k+1}}{\beta}}-1}{\sqrt{1+\frac{M\lambda_{k+1}}{\beta}}+1} \right)=i^{\text{precond}}\ln\left(\frac{\sqrt{\beta+M\lambda_{k+1}} - \sqrt{\beta}}{\sqrt{\beta+M\lambda_{k+1}} + \sqrt{\beta}} \right) 
\end{equation*}
and finally,
\begin{equation}
	\label{eq:precond_rate2}
	i^{\text{precond}} = \frac{\ln{\delta}}{\ln\left(\frac{\sqrt{\beta+M\lambda_{k+1}} - \sqrt{\beta}}{\sqrt{\beta+M\lambda_{k+1}} + \sqrt{\beta}} \right) }
\end{equation}
Combining Eq. \ref{eq:noprecond_rate} with Eq. \ref{eq:precond_rate2} gives the factor of the improvement in the number of iterations


\begin{equation}
	\label{eq:ratio}
	i/i^{\text{precond}}=-\frac{\sqrt{\lambda_1}\ln\left(\frac{\sqrt{\beta+M\lambda_{k+1}} - \sqrt{\beta}}{\sqrt{\beta+M\lambda_{k+1}} + \sqrt{\beta}} \right) }{2\sqrt{\beta}} \approx -\frac{\sqrt{\lambda_1}}{2\sqrt{\beta}}\ln \left(\frac{M\lambda_{k+1}}{4\beta}\right),
\end{equation}
where the last transition is approximated by assuming that $M\lambda_{k+1}$ is small.
The following theorem is taken from \cite{ma2017diving}:
\begin{theorem}\cite{ma2017diving}
\label{th:diving}
If $k(x,y)$ is an infinitely differentiable kernel, the rate of eigenvalue decay is super-polynomial,
i.e $\lambda_j = \mathcal{O}(j^{-P})~~\forall P\in \mathbb{N}$. 
\end{theorem}
\begin{rem}
By using Theorem \ref{th:diving} with Eq. \ref{eq:ratio} it is possible to understand the asymptotic improvement in the number of iterations. For an infinitely differentiable kernel,
\begin{equation*}
	i/i^{\text{precond}}=\mathcal{O}(P\ln k).
\end{equation*}
\end{rem}

\section{Numerical Results}
The algorithm was tested in several scenarios, involving small and large datasets with low and high dimensions. The comparison focuses on the number of iterations and not on absolute time for two reasons: First, the goal is to reduce the number of iterations (preconditioning). Second, the FIG-transform used is not optimized and does not use state-of-the-art software packages for nearest neighbor search nor does it use MKL so absolute times are not very meaningful. Two types of experiements were conducted:
\begin{itemize}
	\item Low dimensional datasets, where the FIG-transform was very efficient, running on a \textit{single core CPU}. The performance of the preconditioner was compared to a random Fourier features (``RFF") based preconditioner, as described in \cite{avron_krr}. 
	The anchor selection was done using Algorithm \ref{alg:ID_Sampling} for the Helicopter dataset and using Algorithm \ref{alg:sparse_ID_Sampling} for the electric field dataset.
	\item High dimensional datasets, running in Amazon AWS cloud on a single EC2 c5.24xlarge machine with multiple cores and anchor selection using Algorithm \ref{alg:sparse_ID_Sampling}.
\end{itemize}
The error was computed in $l_2$ norm, and in the inner-product norm $\Vert \cdot \Vert_{K+\beta I}$.
\subsection{Performance of the Preconditioner}
In this section, the singular values of the kernel were computed with and without the preconditioner. Due to computational limits, which involved computing exact SVD on large matrices, it was done on a subset of random samples from the data and applied to a matrix of size $10,000 \times 10,000$. Clearly, the condition number of the full matrix is much larger, but this still gives an intuition on the performance of the preconditioner.
\begin{figure}[H]
	\centering
	\includegraphics[scale=0.7]{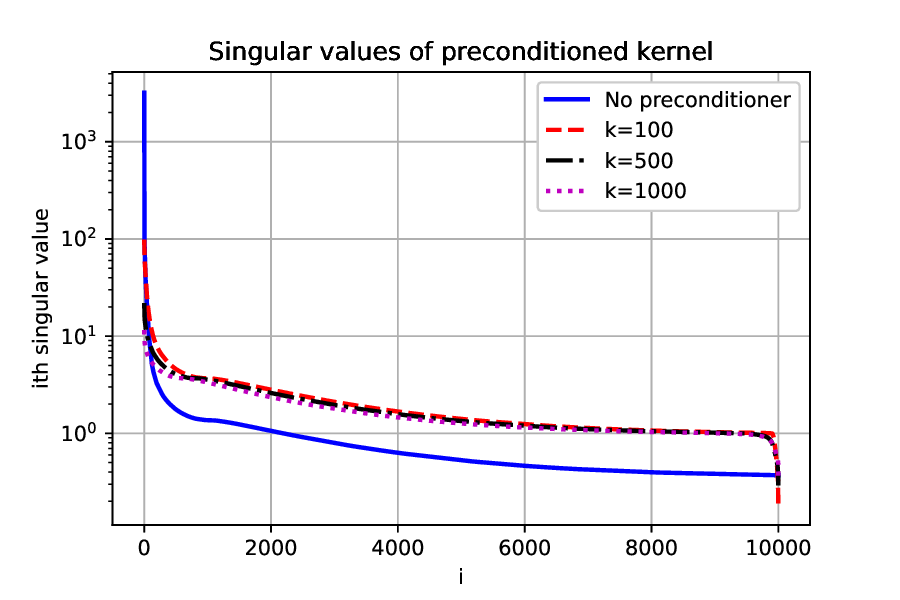}
	\caption{Singular Values with and without preconditioner for \texttt{YearSong} dataset with $h=9$ and $\beta=0.37$}
	\label{fig:sv_yearMSD}.
\end{figure}

Figure \ref{fig:sv_yearMSD} shows the decay of the kernel for the \texttt{YearSong} dataset. The anchor points were selected using Algorithm \ref{alg:sparse_ID_Sampling}, where $l=k+5$. The condition number without preconditioner was $8,545$ and the condition number for $k=100, k=500$ and $k=1000$ was $548,~92$ and $40$, respectively.

The same analysis was applied to the \texttt{CovType} dataset. The decay of the singular values appear in Figure \ref{fig:sv_CovType}.
\begin{figure}[H]
	\centering
	\includegraphics[scale=0.7]{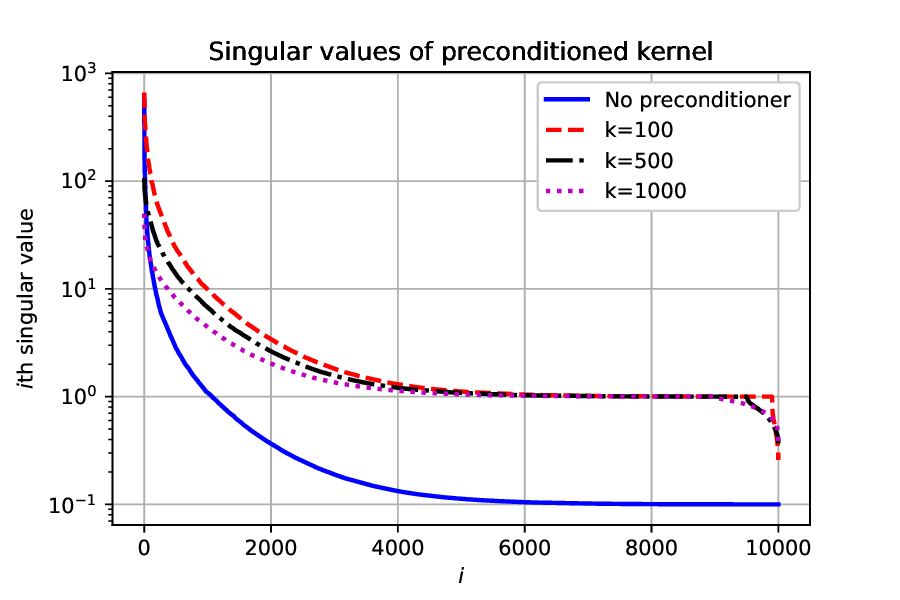}
	\caption{Singular Values with and without preconditioner for \texttt{CovType} dataset with $h=3$ and $\beta=0.1$}
	\label{fig:sv_CovType}.
\end{figure}

Figure \ref{fig:sv_CovType} shows the decay of the kernel for the \texttt{CovType} dataset. The anchor points were selected using Algorithm \ref{alg:sparse_ID_Sampling}, where $l=k+5$. The condition number without preconditioner was $5,677$ and the condition number for $k=100, k=500$ and $k=1000$ was $2556,~289$ and $128$, respectively.

\begin{figure}[H]
	\centering
	\includegraphics[scale=0.7]{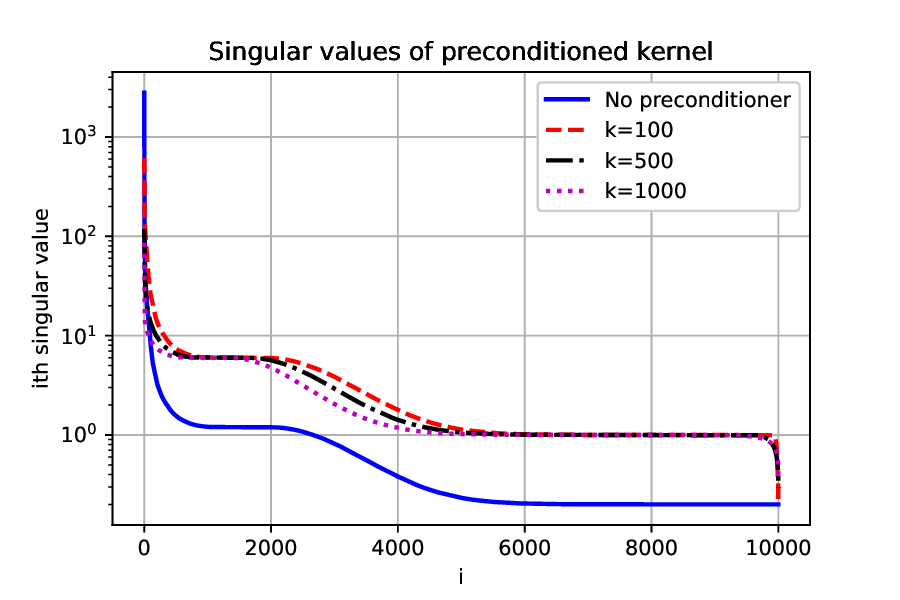}
	\caption{Singular Values with and without preconditioner for \texttt{Buzz} dataset with $h=1$ and $\beta=0.2$}
	\label{fig:sv_Buzz}.
\end{figure}

Figure \ref{fig:sv_Buzz} shows the decay of the kernel for the \texttt{Buzz} dataset. The anchor points were selected using Algorithm \ref{alg:sparse_ID_Sampling}, where $l=k+5$. The condition number without preconditioner was $14,000$ and the condition number for $k=100, k=500$ and $k=1000$ was $2832,~403$ and $242$, respectively.

\subsection{Low Dimensional Datasets}
In this section, empirical evaluation of the algorithm is presented and compared against the naive implementation (no preconditioning) and 
RFF preconditioning (\cite{avron_krr}). Comparing with \cite{avron_krr} is relatively straightforward, since both algorithms are similar as they both solve the KRR problem accurately by using a preconditioner.
Part of the RFF algorithm can also be implemented using the FIG transform, while the algorithm suggested here, can be implemented 
by direct matrix multiplication. The Helicopter dataset described in \cite{helicopter} and the electric field dataset was 
generated by a simulator who generated a simple 3D field of $n=100,000$ points. 
The most costly computation in the preprocessing of the presented flow is the QR decomposition in Algorithm \ref{alg:ID_Sampling} and Algorithm \ref{alg:sparse_ID_Sampling} which is $\mathcal{O}(nl^2)$ and the cost of each iteration is of $\mathcal{O}(n+k^2)$. 
In the RFF method (\cite{avron_krr}) the preprocessing takes $\mathcal{O}(ns^2)$ operations and each iteration takes $\mathcal{O}(ns)$ operations, where $s$ is number of sine and cosine components in the RFF (i.e. the size of the preconditioner). 
Throughout the experiments, we made sure that the computational cost of the method suggested in this paper, is lower than the computational budget that was given to the RFF method. For simplicity, we used the same number of iterations for both methods.
Figure \ref{fig:hel_100_main} shows the comparison between the distance $\Vert \alpha^*-\alpha^{(i)}\Vert_{K+\beta I}$, where $\alpha^*$ is the exact solution, and $\alpha^{(i)}$ is the solution at iteration $i$. 
\begin{figure}[htp]
	\centering
	\begin{subfigure}[b]{0.45\linewidth}
		\includegraphics[width=\linewidth,height=5cm]{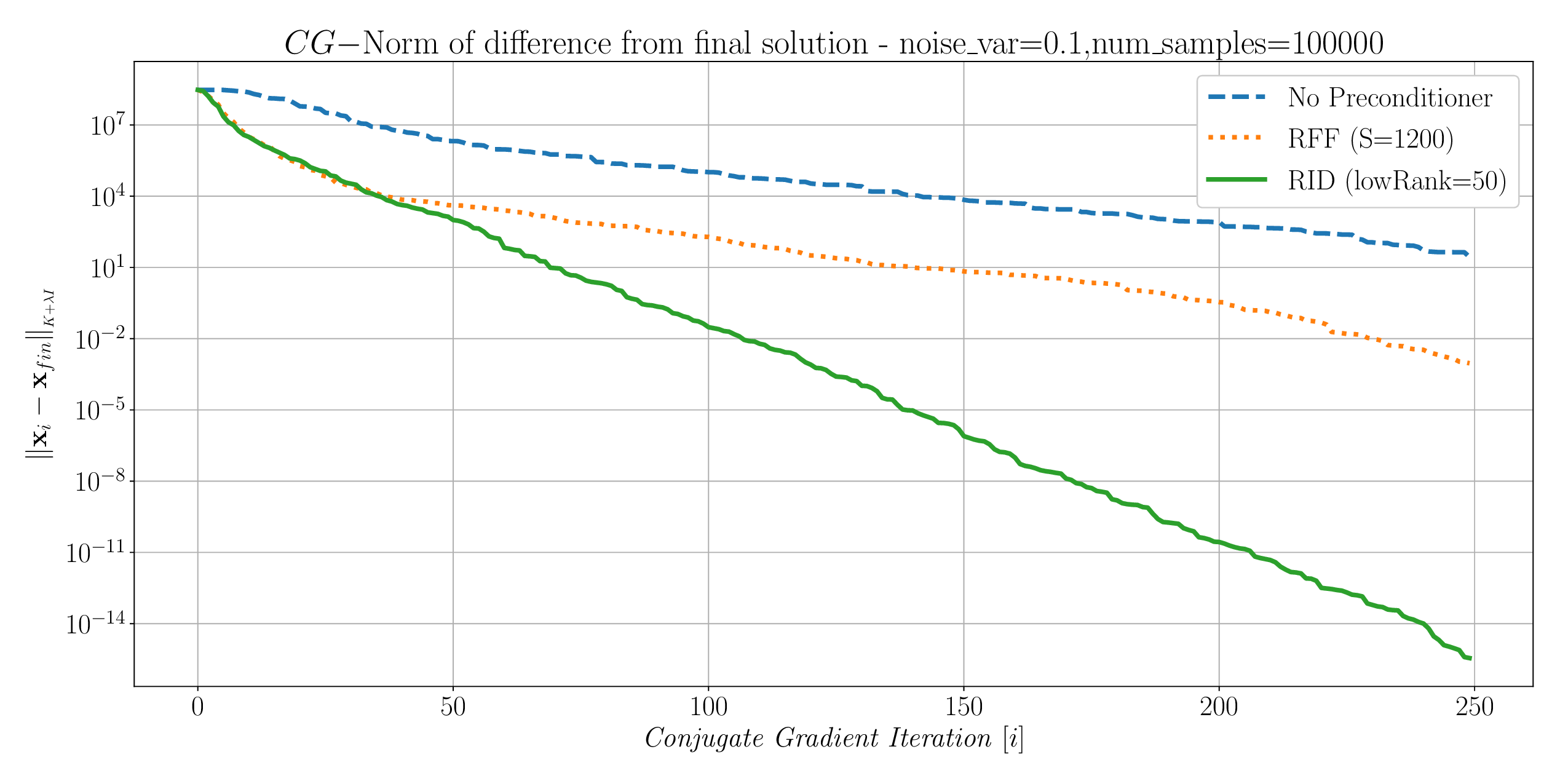}
		\caption{$\beta=0.1$}
		\label{fig:hel_50}
	\end{subfigure}
	\begin{subfigure}[b]{0.45\linewidth}
		\includegraphics[width=\linewidth,height=5cm]{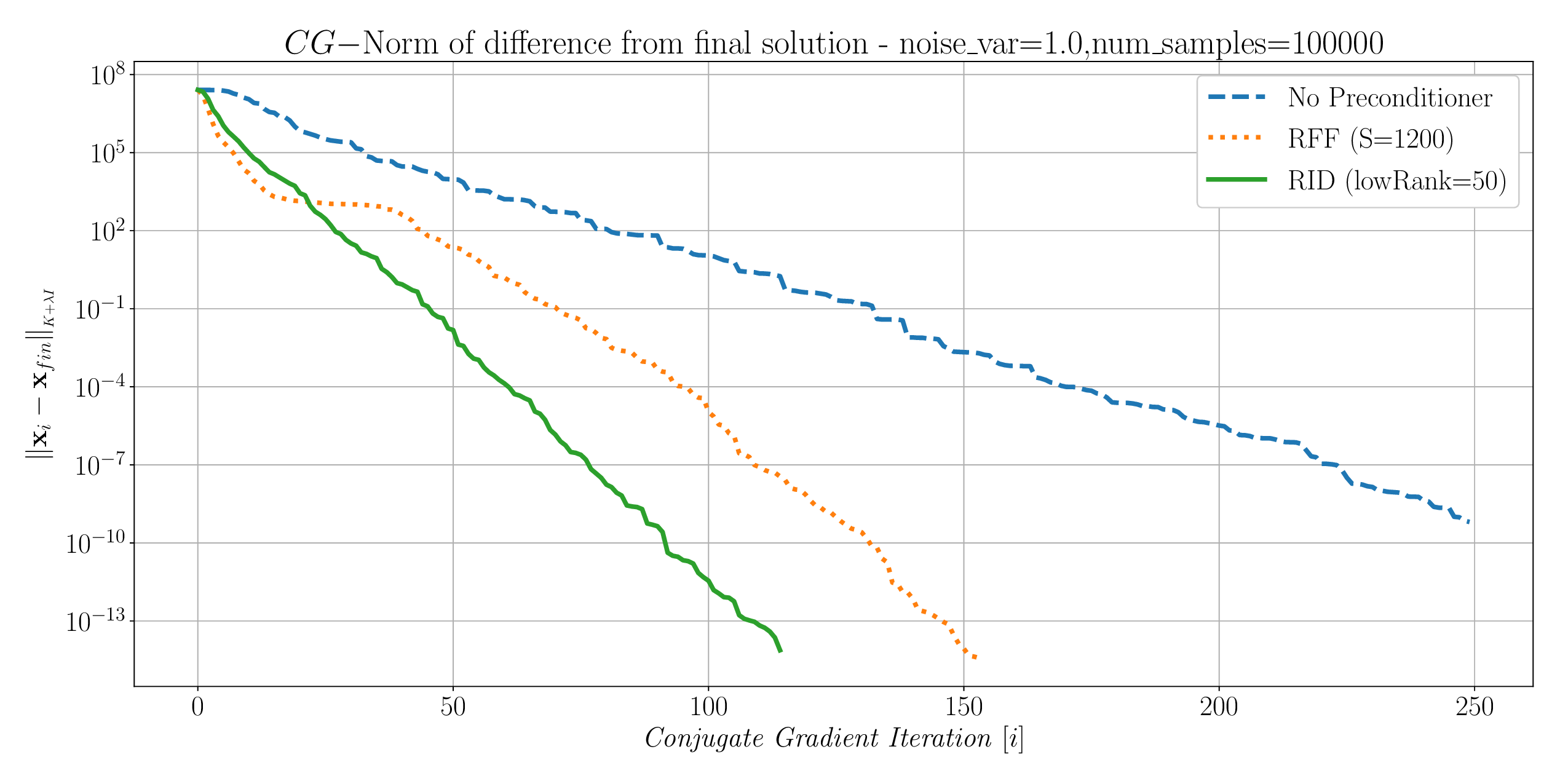}	
		\caption{$\beta=1$}
		\label{fig:hel_100}
	\end{subfigure}
	\caption{Stanford Helicopter: Comparison between different noise levels, $k=50$}
	\label{fig:hel_100_main}
\end{figure}
\begin{rem}
	Each iteration of the proposed algorithm is $\mathcal{O}(n+k^2)$, while every iteration of RFF is $\mathcal{O}(ns)$. For $n \gg k^2$, each RFF iteration is asymptotically more costly in terms of run-time complexity by a factor of $s$.
\end{rem}
\begin{rem}
	When the spectral decay is fast, a small $k$ will be sufficient for satisfying results.
\end{rem}
\begin{figure}[ht]
	\centering
	\begin{subfigure}[b]{0.45\linewidth}
		\includegraphics[width=\linewidth,height=7cm]{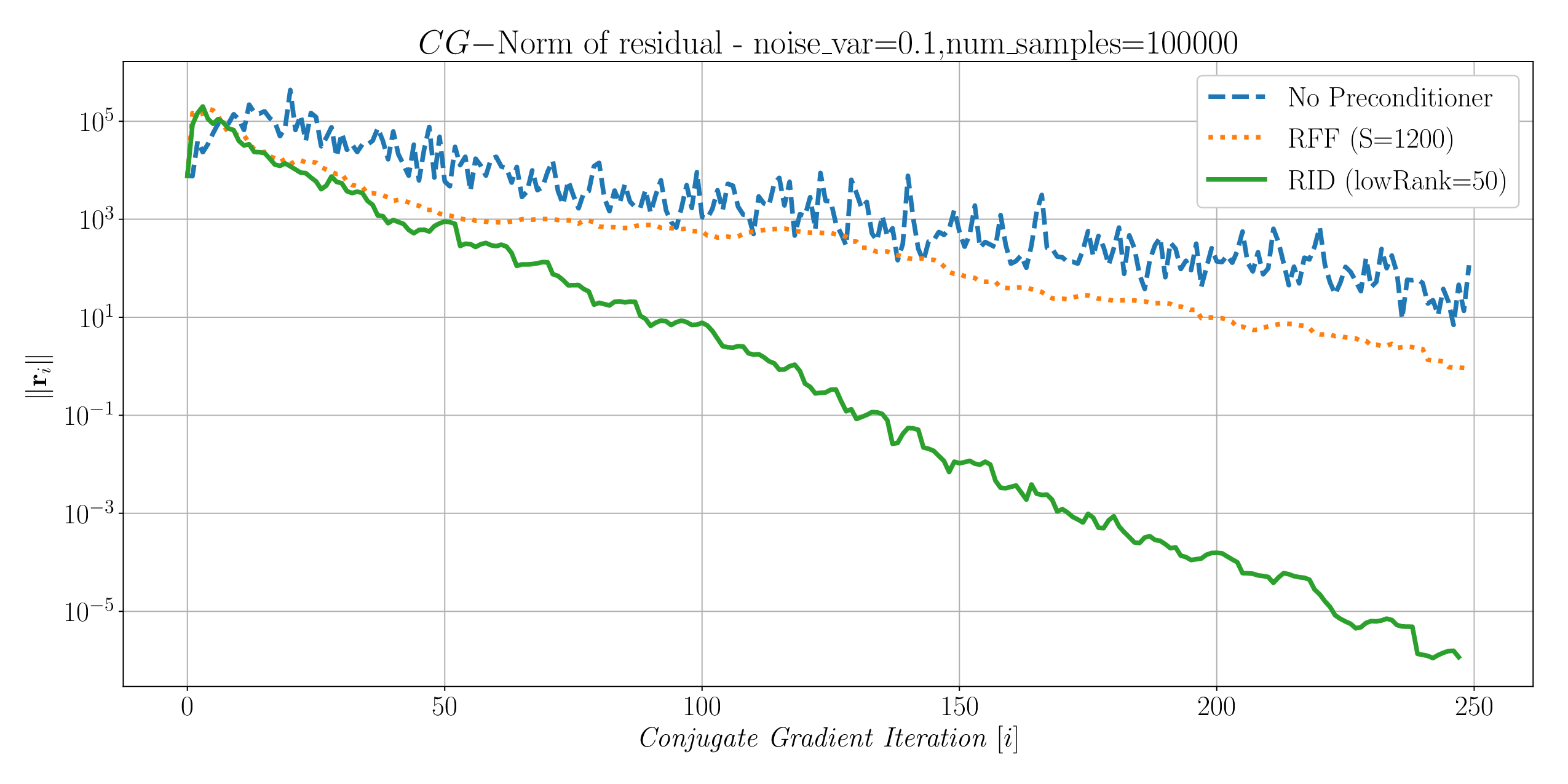}
		\caption{Stanford Helicopter Dataset $k=50$, $\beta=0.1$}
		\label{fig:hel_50_r}
	\end{subfigure}
	\begin{subfigure}[b]{0.45\linewidth}
		\includegraphics[width=\linewidth,height=7cm]{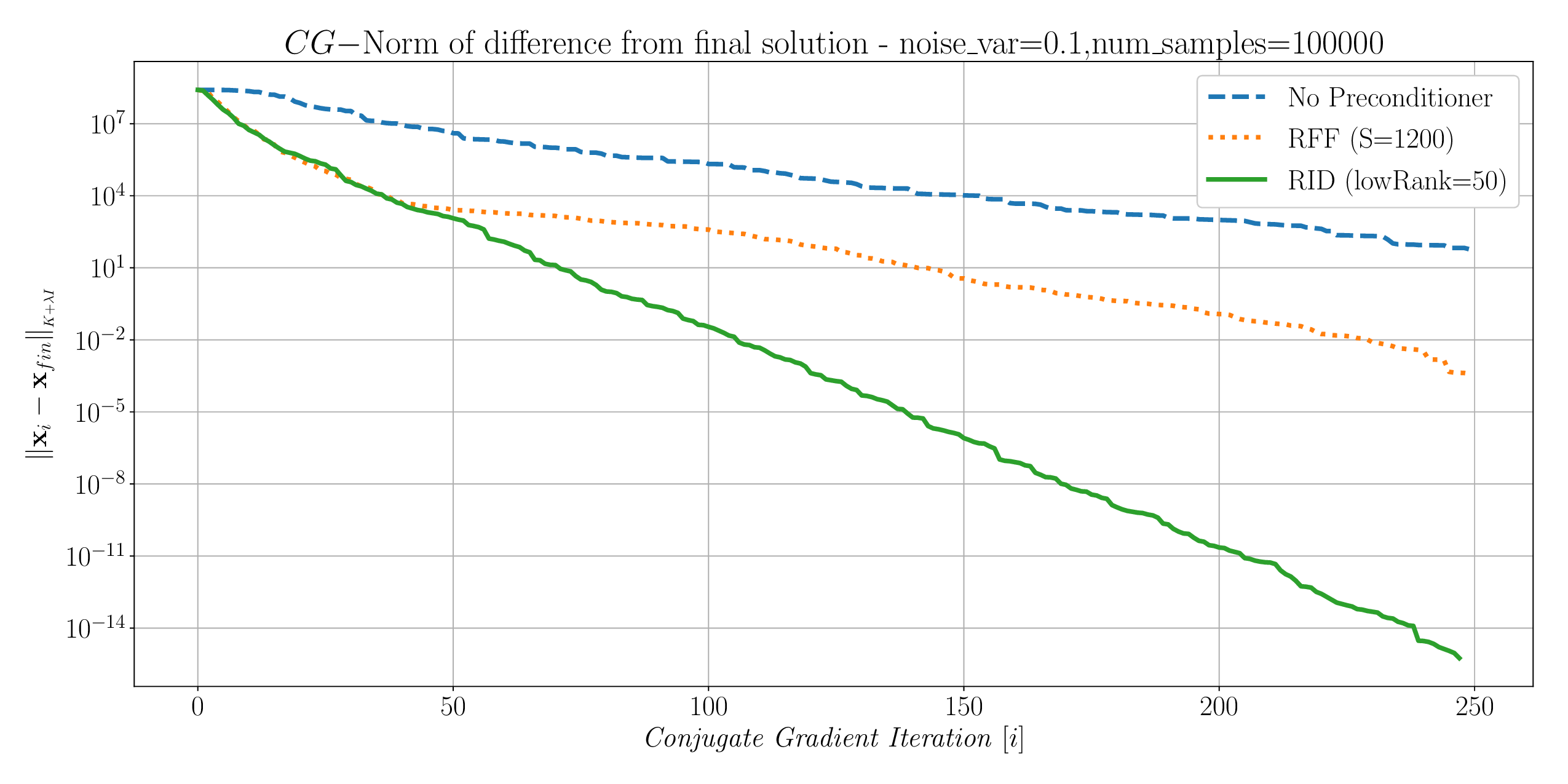}
		\caption{Stanford Helicopter Dataset $k=50$, $\beta=0.1$}
		\label{fig:hel_50_sol}
	\end{subfigure}
	\caption{For 100K datapoints from accelerating 2D (X,Y) predicting next timestep for Y}
\end{figure}
\subsubsection{Synthetic Electromagnetic Field Simulator}
In addition to the Helicopter dataset, a three dimensional synthetic dataset generated by an electric field simulator was used to test the performance of the algorithm. The simulation generated a simple electric field of $5$ charges in the space. The field was sampled in $100,000$ points and the goal was to estimate the electric potential at each of those points. Two experiments were conducted: For the first one, whose results are shown in Fig. \ref{fig:EM_RFF30_100k}, the random Fourier matrix was chosen with $s=180$. For the second experiment, whose results are shown in Fig. \ref{fig:EM_RFF150_100k}, more random features was used, so that $s=900$. It is interesting to note, that for the case of $s=180$, the performance of the RFF preconditioner were similar to the performance of the algorithm without preconditioner.
This might have been that when taking small values of $s$ the RFF do not approximate the kernel well and leads to a larger condition number.

\begin{figure}[h!]
	\centering
	\begin{subfigure}[b]{0.45\linewidth}
		\includegraphics[width=\linewidth,height=7cm]{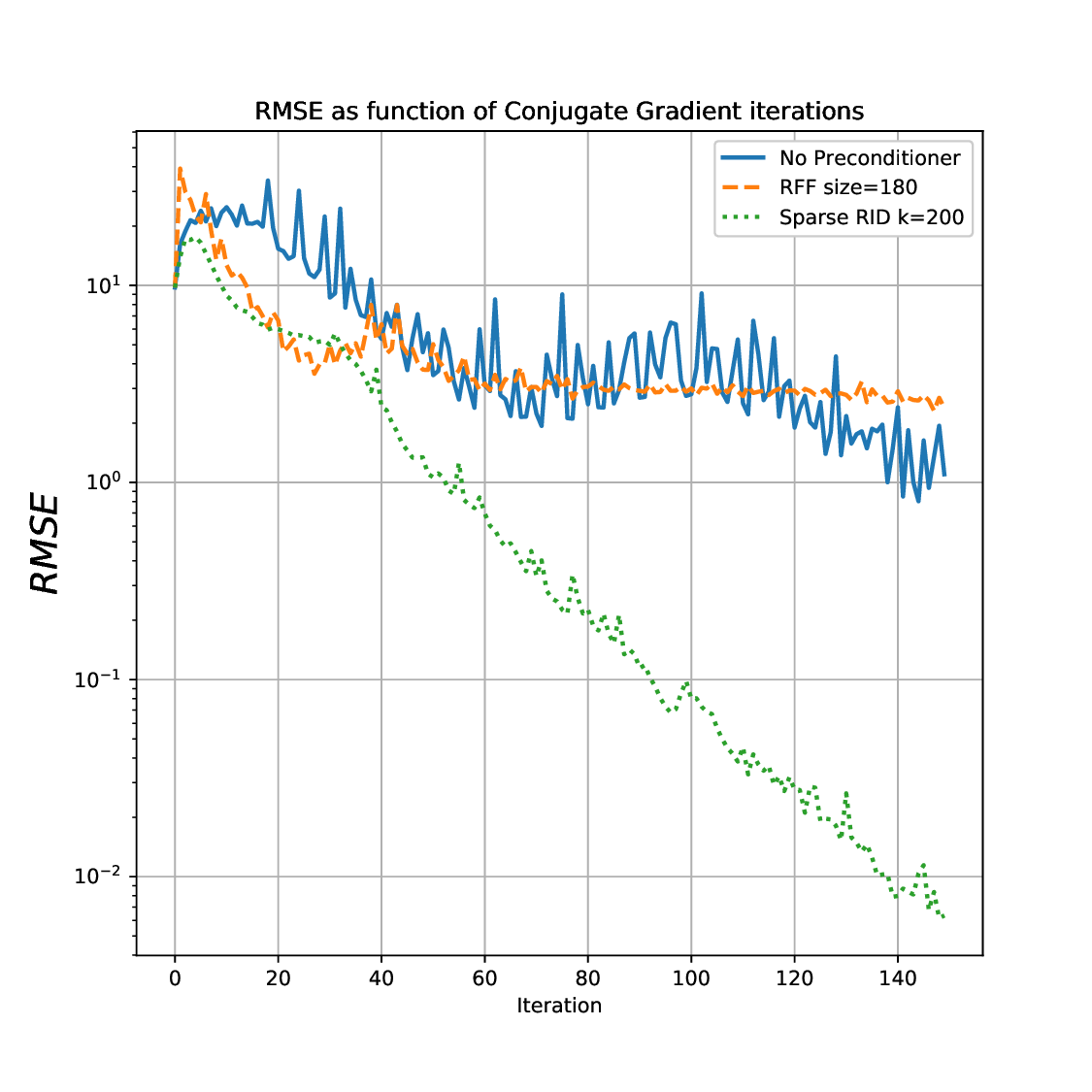}
		\caption{Error, standard norm}
	\end{subfigure}
	\begin{subfigure}[b]{0.45\linewidth}
		\includegraphics[width=\linewidth,height=7cm]{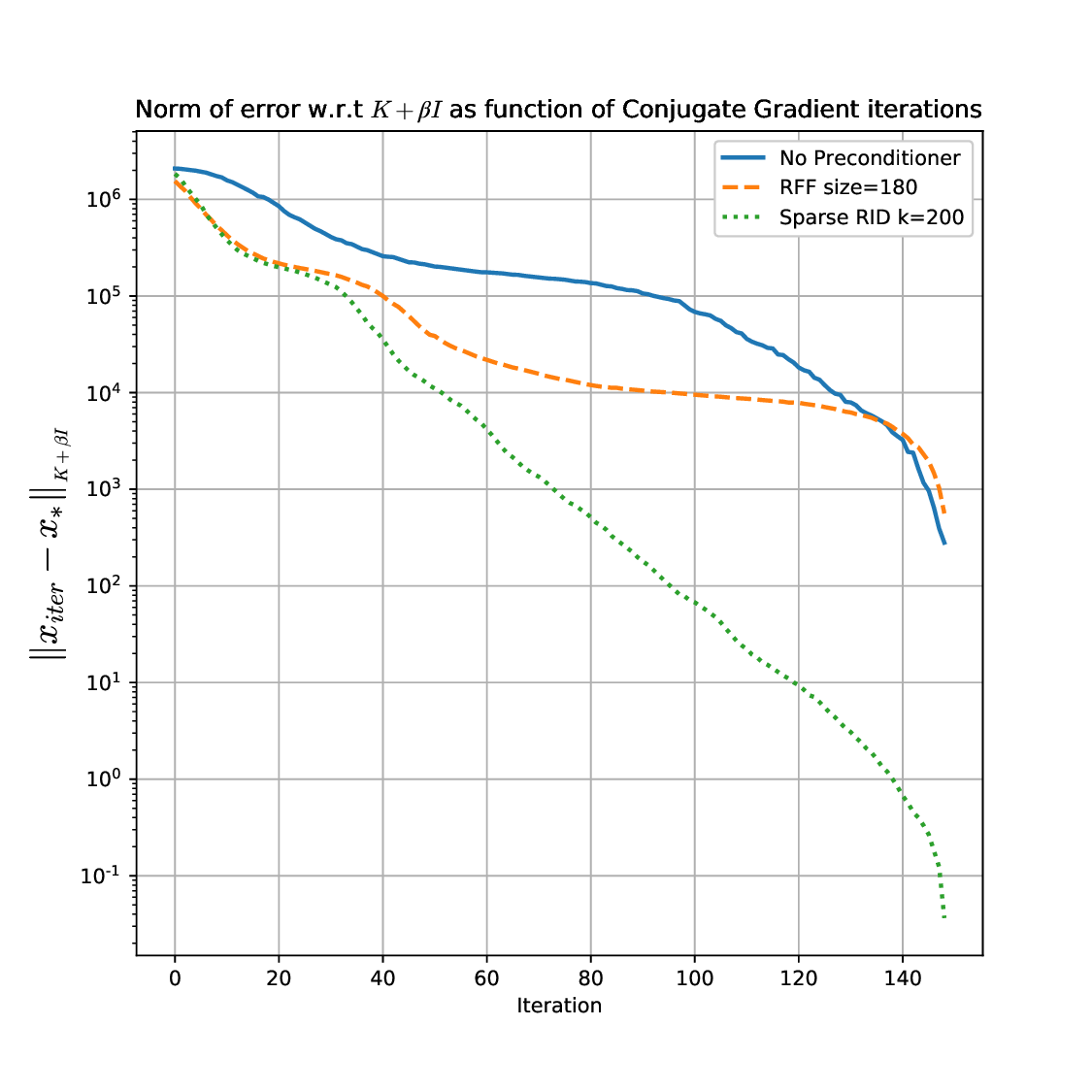}
		\caption{Error, inner-product norm}
	\end{subfigure}
	\caption{100K datapoints of the EM dataset, $k=200,$RFF size=180$, h=0.1,~\beta=0.1$}
	\label{fig:EM_RFF30_100k}
\end{figure}

\begin{figure}[h!]
	\centering
	\begin{subfigure}[b]{0.45\linewidth}
		\includegraphics[width=\linewidth,height=7cm]{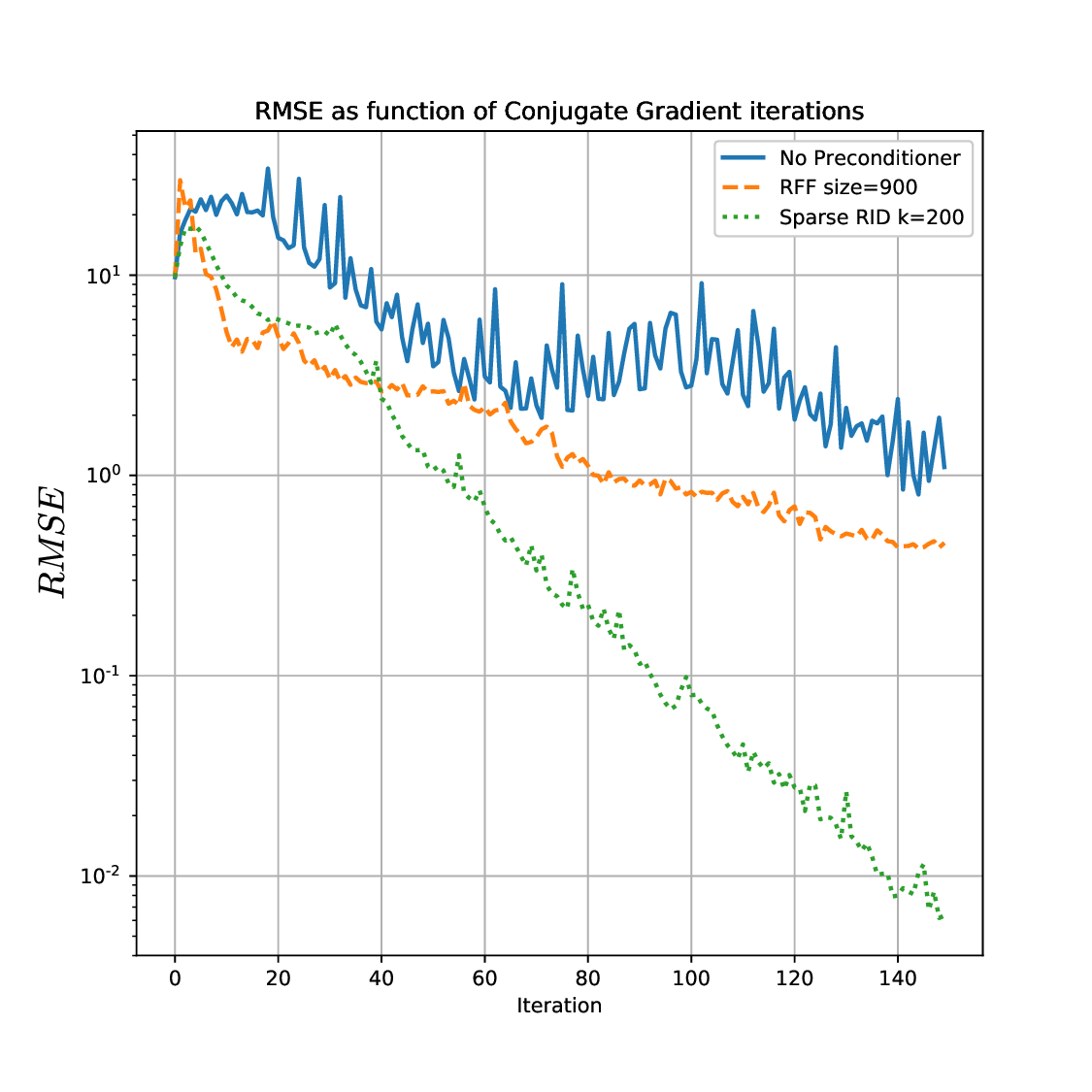}
		\caption{Error, standard norm}
	\end{subfigure}
	\begin{subfigure}[b]{0.45\linewidth}
		\includegraphics[width=\linewidth,height=7cm]{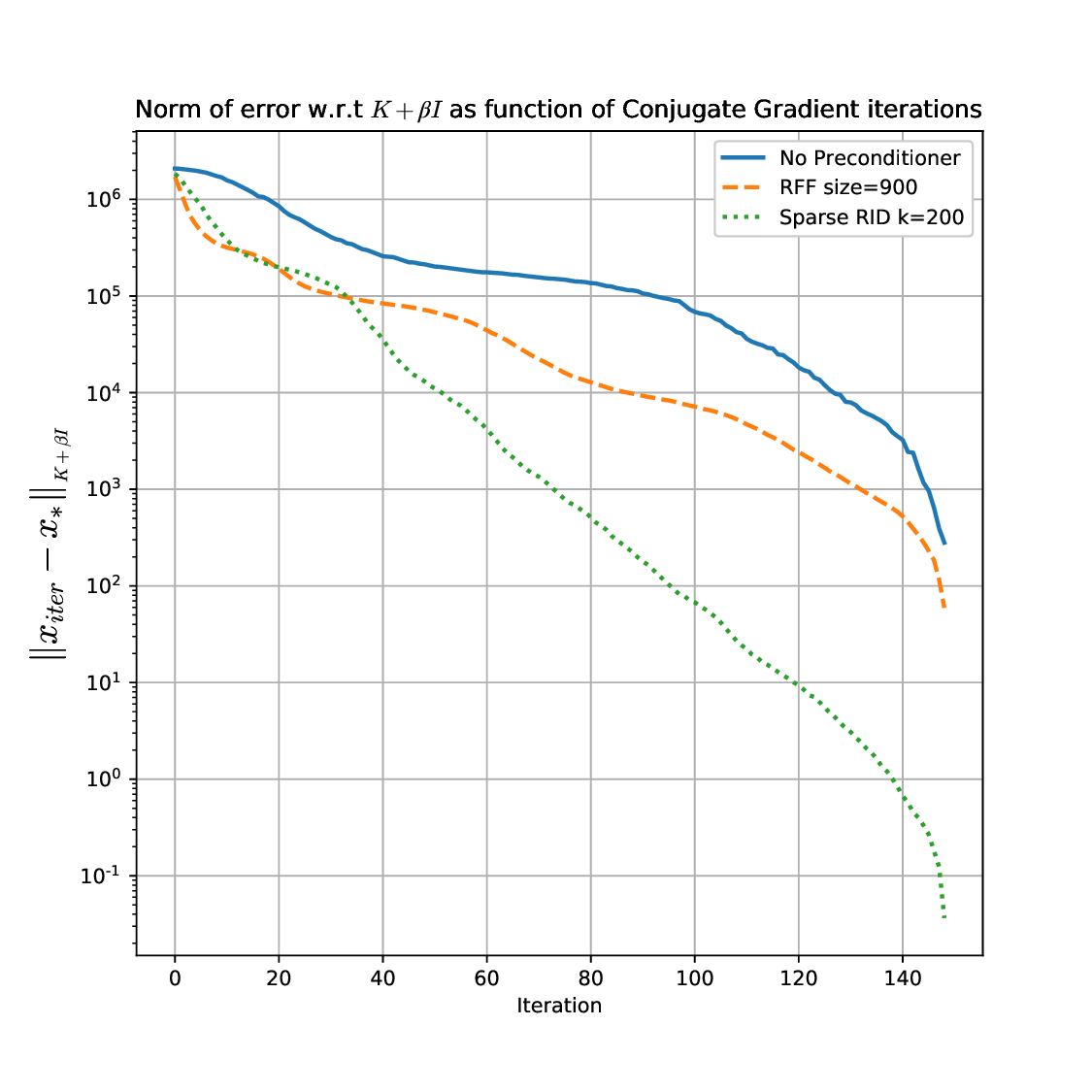}
		\caption{Error, inner-product norm}
	\end{subfigure}
	\caption{100K datapoints of the EM dataset, $k=200,$ RFF size=900$, h=0.1, \beta=0.1$}
	\label{fig:EM_RFF150_100k}
\end{figure}
\subsection{Results on High Dimensional Datasets}
The method was tested on larger datasets: \texttt{CoverType}, \texttt{Buzz} and \texttt{YearSong}. 
Since the method uses the improved FGT, the method works well even for relatively high dimensions such as $d=77$ and $d=90$. \texttt{CoverType} is a classification (multiclass) dataset, where the label has 7 options. Therefore, the algorithm was applied 7 times, using $+1$ for $i$th type and $-1$ for the others ($i=1,\ldots,7$). The predicted class was determined by the highest score, leading to $90\%$ accuracy on the test data. As for the regression datasets, the algorithm reached a stopping point for $\Vert Ax-b \Vert_2 / \Vert b \Vert_2 \le 10^{-3}$. 
\subsubsection{Comparison to Random Fourier Features Preconditioner}
First, the performance of the algorithm was tested on the \texttt{YearSong} dataset. The dataset is of size 515K in total and the dimension is $90$. For the training, $2/3$ of the dataset was used. The features were normalized to have zero mean and unit variance. The parameters were $h=9$ and $\beta=0.37$. Since in the comparison with the RFF preconditioner, the exact same problems are solved, just with different preconditioner, the important thing to compare is the convergence rate, rather than the error on the test dataset. Figure \ref{fig:YearMSD_Conv} shows the convergence rate of Algorithm \ref{alg:SolveKRR} compared to the RFF preconditioner and no preconditioner. For the RFF we used $s=900$ and for Algortihm \ref{alg:SolveKRR} we used $k=495$ and $l=500$.
\begin{figure}[H]
	\centering
	\begin{subfigure}[b]{0.45\linewidth}
		\includegraphics[width=\linewidth,height=7cm]{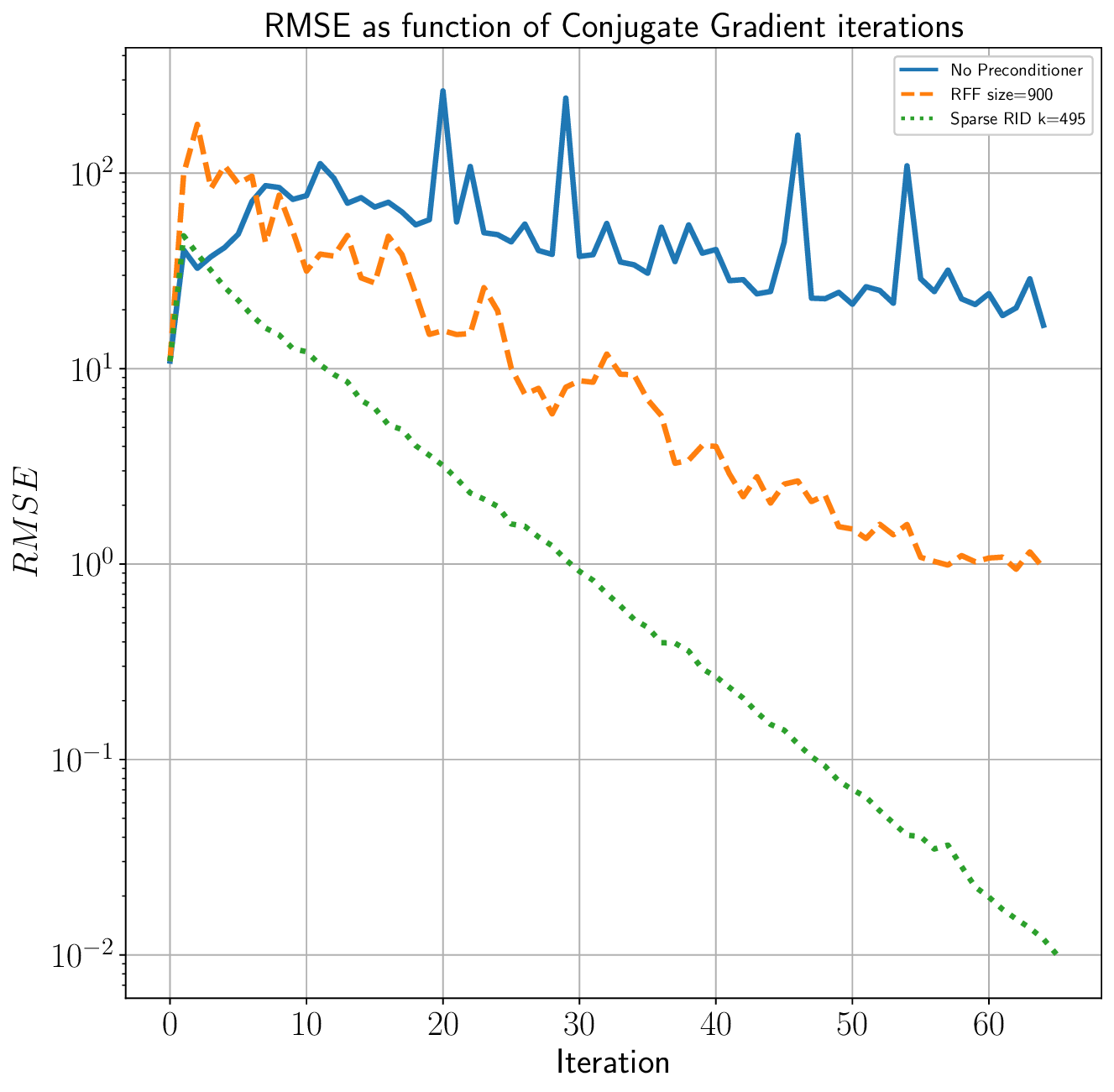}
		\caption{RMSE}
	\end{subfigure}
	\begin{subfigure}[b]{0.45\linewidth}
		\includegraphics[width=\linewidth,height=7cm]{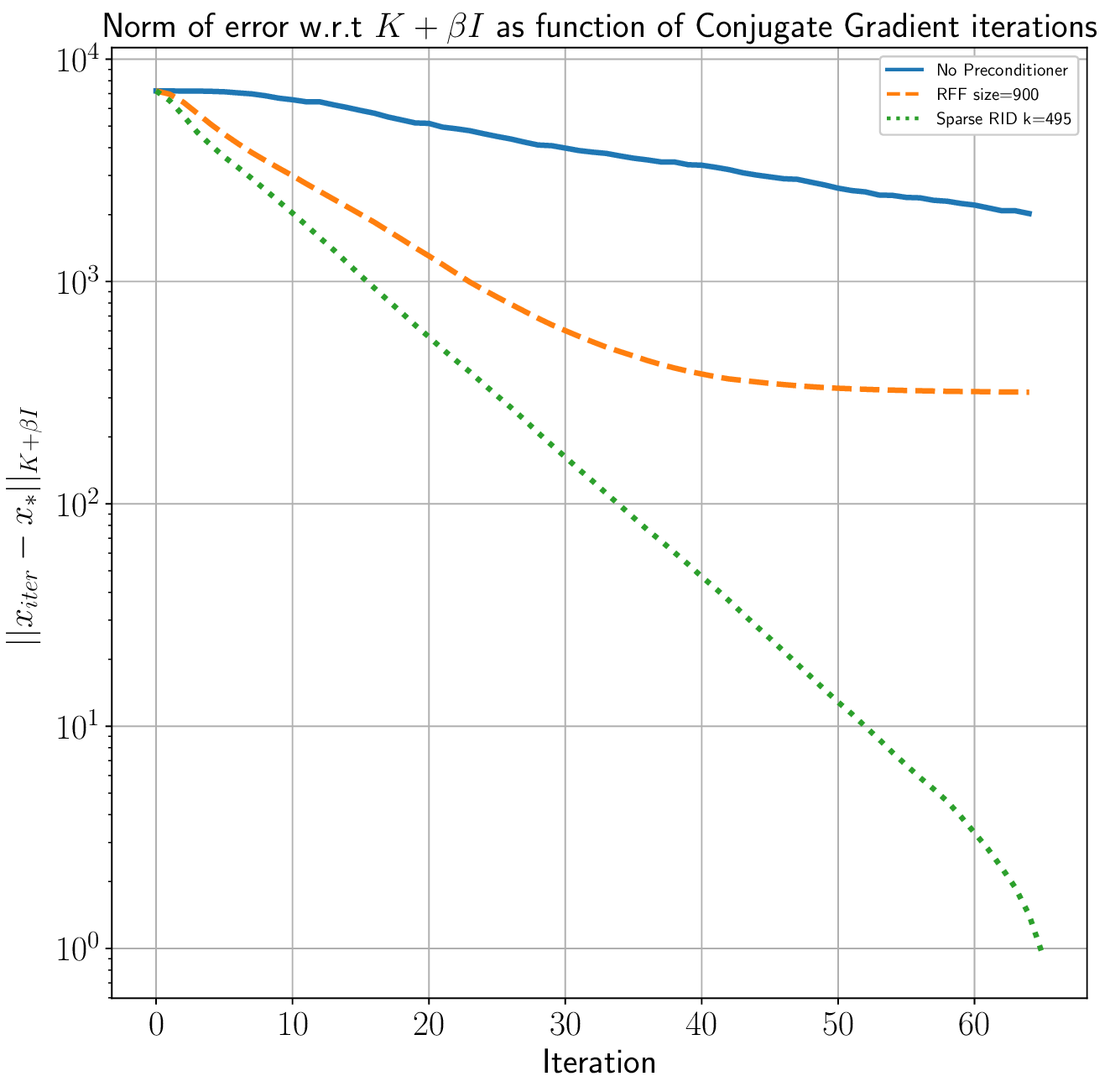}
		\caption{Inner-product norm error}
	\end{subfigure}
	\caption{Convergence comparison for the \texttt{YearSong} training dataset $h=9, \beta=0.37$}
	\label{fig:YearMSD_Conv}
\end{figure}
The experiment was repeated on the \texttt{CoverType} dataset. The dataset has $581K$ measurements and the dimension is $54$. The dataset was normalized to have zero mean and unit variance and $2/3$ of the data was used for training. The parameters used were $h=2, \beta=0.1, k=795, l=800$ and $s=648$. 
Figure \ref{fig:Covtype_Conv} shows the comparison between the suggested preconditioner, the RFF and no preconditioner. 

\begin{figure}[H]
	\centering
	\begin{subfigure}[b]{0.45\linewidth}
		\includegraphics[width=\linewidth,height=7cm]{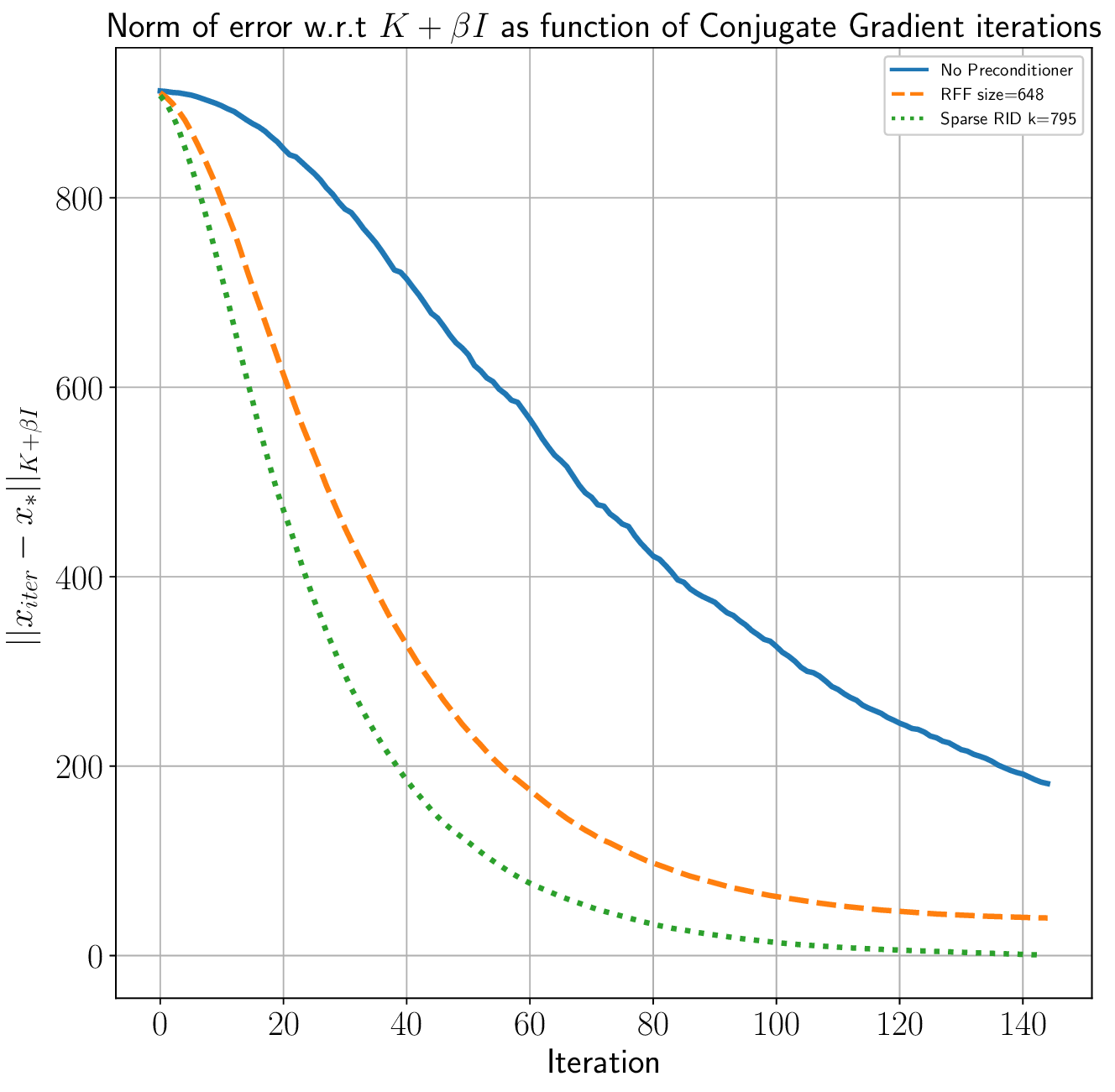}
		\caption{RMSE}
	\end{subfigure}
	\begin{subfigure}[b]{0.45\linewidth}
		\includegraphics[width=\linewidth,height=7cm]{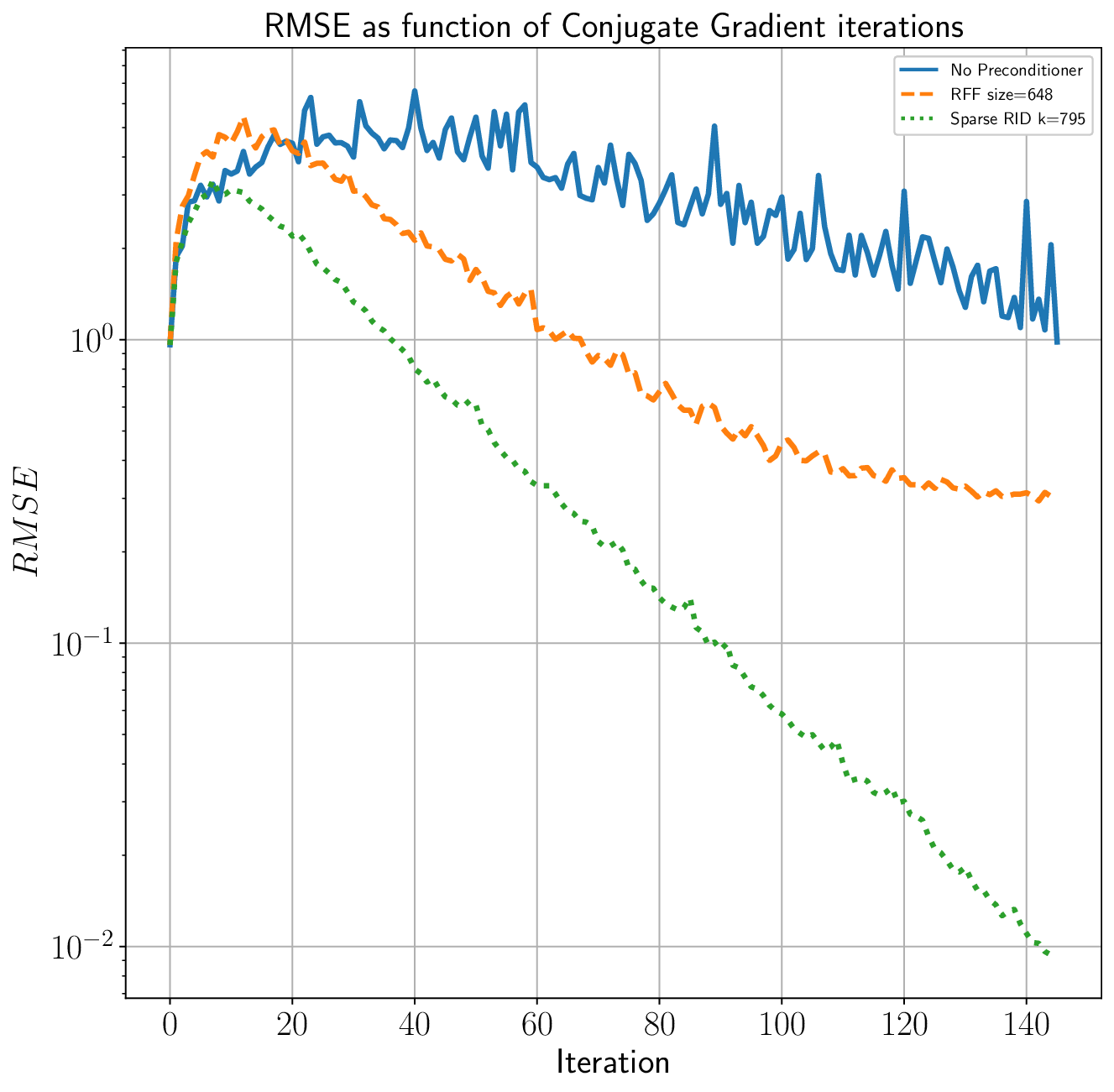}
		\caption{Inner-product norm error}
	\end{subfigure}
	\caption{Convergence comparison for the \texttt{CovType} training dataset $h=2, \beta=0.1$}
	\label{fig:Covtype_Conv}
\end{figure}

Next, the same experiment is conducted on the \texttt{Buzz} dataset. The size of the dataset is $583K$ and the dimension is $77$.  
The dataset was normalized to have a minimal value of zero and a maximal value of one. Again $2/3$ of the dataset was used for training. The parameters were $h=1, \beta=1, k=795, l=800$ and $s=770$. Figure \ref{fig:Buzz_Conv} shows the comparison between the suggested preconditioner, the RFF and no preconditioner. On this dataset, the performance of both preconditioners are similar. The convergence in RMSE is similar, and the convergence in the norm is slightly better with the RFF, where it has an advantage of about 4 iterations along the way. However, it seems to reach a plateau towards the end (last 5 iterations), while the KRR seems to keep the same slope.

\begin{figure}[H]
	\centering
	\begin{subfigure}[b]{0.45\linewidth}
		\includegraphics[width=\linewidth,height=7cm]{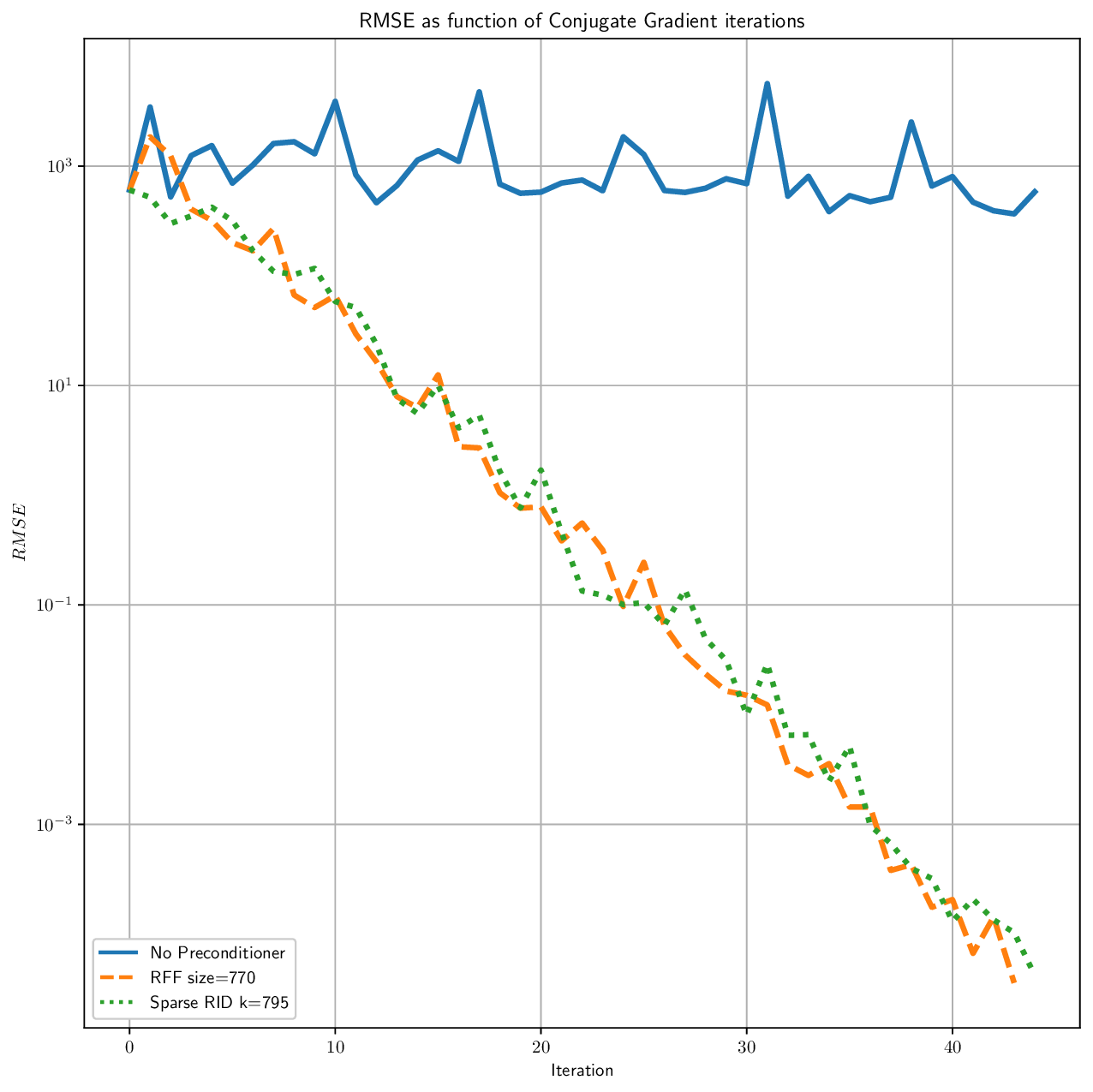}
		\caption{RMSE}
	\end{subfigure}
	\begin{subfigure}[b]{0.45\linewidth}
		\includegraphics[width=\linewidth,height=7cm]{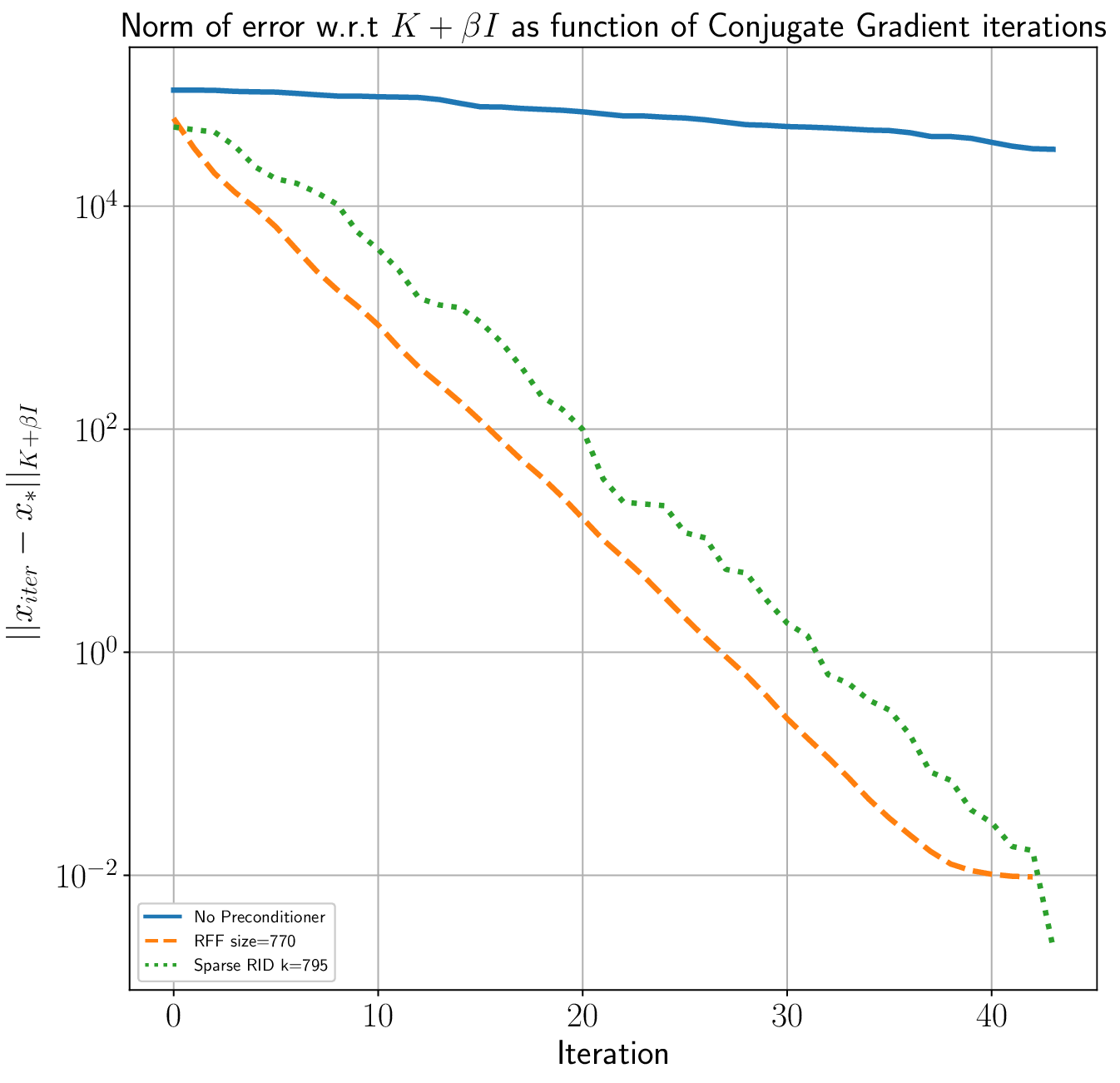}
		\caption{Inner-product norm error}
	\end{subfigure}
	\caption{Convergence comparison for the \texttt{Buzz} training dataset $h=1, \beta=1$}
	\label{fig:Buzz_Conv}
\end{figure}

\subsubsection{Comparison with the FALKON algorithm}
We now perform another comparison, this time with the FALKON algorithm \cite{rudi2017falkon}. The FALKON is a very fast algorithm that can solve KRR on very large datasets by using stochastic subsampling and preconditioning. In general, the FALKON samples $m$ points out of $n$, requires $\mathcal{O}({m^3})$ operations for the preprocessing and $\mathcal{O}(mn)$ operations for each iterations. According to \cite{rudi2017falkon}, it seems that in order to achieve high accuracy, 
it is usually sufficient to sample $m=\sqrt{n}$ points. For the FALKON algorithm, comparison is not as simple as for  RFF, since the algorithms are not as similar. We tried to compare in a way such that the FALKON will get a higher computational budget (by taking a $m > \sqrt{n}$) and comparing the error to the nearly exact KRR solution presented in the paper.
The number of iterations for the FALKON was not limited so the algorithm achieves full convergence. We repeat the experiments performed on the high dimensional datasets above.

\begin{table}
	\resizebox{\columnwidth}{!}{%
		\begin{tabular}{llllllllll}
			Dataset    & Train Size& d & $h$  & $\beta$  & Method & Parameter ($m,l$) & Iterations & Train RMSE & Test RMSE \\\specialrule{3pt}{0pt}{0pt}
			Buzz       & 390777                    & 77        & 1   & 1 & Algorithm \ref{alg:SolveKRR}   & $l=800$  & 45                  & 145    & 207     \\
			Buzz       & 390777                    & 77        & 1   & 1 & FALKON                   & $m=20000$  & 33                 & 200  & 257     \\
			Buzz       & 390777                    & 77        & 1   & 1 & RFF                      & $s=770$  & 44                  & 145  & 207     \\
			Buzz       & 390777                    & 77        & 1   & 1 & No precond               & N/A  & 45                  & 504  & 524     \\
			YearMSD    & 345281                    & 90        & 9  & 0.37 & Algorithm \ref{alg:SolveKRR}   & $l=500$ & 65                  & 6.79     & 8.49    \\
			YearMSD    & 345281                    & 90        & 9  & 0.37 & FALKON                         & $m=8000$ & 37                  & 8.65     & 8.81    \\
			YearMSD    & 345281                    & 90        & 9  & 0.37 & RFF                            & $s=900$ & 65                  & 6.86     & 8.54    \\
			YearMSD    & 345281                    & 90        & 9  & 0.37 & No precond                         & N/A & 65                  & 26     & 26.4    \\
			CovType & 389278                    & 54        & 2 & 0.1 & Algorithm \ref{alg:SolveKRR}   & $l=800$           & 145                  & 0.43   & 0.46    \\
			CovType & 389278                    & 54        & 2 & 0.1 & FALKON   & $m=15000$           & 36                 & 0.51   & 0.55    \\
			CovType & 389278                    & 54        & 2 & 0.1 & RFF   & $s=648$           & 145                  & 0.52   & 0.55    \\
			CovType & 389278                    & 54        & 2 & 0.1 & No precond   & N/A            & 145                  & 1.01   & 1.02    \\
		\end{tabular}
	}	
	\caption{Results for a variety of large datasets}
	\label{table:falkon_compare}
\end{table}

Therefore, we added a comparison between the fraction of preprocessing time needed for those datasets with respect to the FALKON and the RFF. The preprocessing time is measured as a fraction of the total running time. All the parameters are as appear in Table \ref{table:falkon_compare}. 
The results of fraction of time required for preprocessing are in Table \ref{table:fraction_time}, with parameters identical to those in Table \ref{table:falkon_compare}.
Indeed, the suggested KRR algorithm has the highest preprocessing time compared to the other algorithms, though it is still relatively small with respect to the iterations part.

\begin{table}[H]
	\centering
	\begin{tabular}{lll}
		Dataset & ~~~~~~Method 							 &	~~~~~~Preprocessing time fraction  
		 \\\specialrule{3pt}{0pt}{0pt}
		YearMSD &  ~~~~Algorithm \ref{alg:SolveKRR}      &      ~~~~~~~~~~~~~~~$3.8\%$     \\
		YearMSD &  ~~~~FALKON 						     &      ~~~~~~~~~~~~~~~$1.5\%$                            \\
		YearMSD &  ~~~~RFF    						     &      ~~~~~~~~~~~~~~~$0.026\%$                 \\          
		CovType &  ~~~~Algorithm \ref{alg:SolveKRR}      &      ~~~~~~~~~~~~~~~$2.7\%$                       \\
		CovType &  ~~~~FALKON 						     &      ~~~~~~~~~~~~~~~$3\%$                            \\
		CovType &  ~~~~RFF    						     &      ~~~~~~~~~~~~~~~$0.016\%$                 \\ 
		Buzz &  ~~~~Algorithm \ref{alg:SolveKRR}      &      ~~~~~~~~~~~~~~~$6.7\%$                       \\
		Buzz &  ~~~~FALKON 						     &      ~~~~~~~~~~~~~~~$5.4\%$                            \\
		Buzz &  ~~~~RFF    						     &      ~~~~~~~~~~~~~~~$0.034\%$                              
	\end{tabular}
\caption{The percentage of preprocessing time for different algorithms and datasets}
\label{table:fraction_time}

\end{table}


\section{Conclusions}
Kernel ridge regression is a powerful technique with a theoretic basis that is often constrained by memory and run-time complexity when used naively. In this paper we narrowed our focus to Gaussian kernels in order to utilize fast MVM operations while maintaining the same specialized structure for the preconditioner as well, which allowed us to maintain the speed-up of fast MVM operations in the application of the preconditioned kernel. 
One major drawback of our method is the polynomial dependency on the dimension of the data, which despite improvement in the fast improved Gauss transform, still hinders the application of the algorithm to extremely high dimensional datasets. However, the FIG transform is used as a black-box and can be replaced by other MVM or even in some cases by direct computation of the full matrix (probably in parts). Also, the preconditioner can be used ``as is" for other kernels, with or without utilizing fast MVM.
In addition, we provide theoretical insights and performance bounds to our method, which build on the rich existing theoretical work on kernel ridge regression and numerical linear algebra. As shown, our method provides a way to choose the required low-rank approximation with the appropriate tradeoff in speed.
We presented an algorithm that is strict in memory, exact, allows efficient application of the preconditioner and reduces the condition number. Our results show fast convergence of the CG algorithm, outperforming similar SOTA methods. In particular, the algorithm scales well for large values of n, and is suitable for cases where $n \gg d$.

\clearpage
\bibliography{Faster_KRR_SIAM_R2}
\bibliographystyle{siam}

\end{document}